\documentclass[psamsfont,a4paper,12pt]{amsart}
\usepackage{pifont}
\usepackage[dvipsnames]{xcolor}
\usepackage{fullpage, amsmath, amssymb, amsthm, amscd, setspace, bm, graphicx, indentfirst, multirow, tikz, enumerate, verbatim, appendix}
\usepackage{adjustbox,amsfonts,array,graphicx,booktabs,tabularx,multirow,multicol,stmaryrd, tabu}
\usepackage[utf8]{inputenc}
\usepackage{cite}
\usepackage{hyperref}
\usepackage{mathabx,epsfig}
\usepackage{mathtools}
\usepackage{shuffle}
\usepackage{tikz-cd}
\usepackage{thmtools, thm-restate}

\date{\today}

\newtheorem{theorem}{Theorem}
\newtheorem{lemma}{Lemma}
\newtheorem{proposition}{Proposition}
\newtheorem{corollary}{Corollary}

\theoremstyle{definition}
\newtheorem{definition}{Definition}
\newtheorem{remark}{Remark}

\theoremstyle{remark}
\newtheorem{example}{Example}

\newcommand{\Z}{\mathbb{Z}}

\newcommand{\Q}{\mathbb{Q}}

\newcommand{\C}{\mathbb{C}}

\newcommand{\ba}{\overline}
\newcommand{\ul}{\underline}
\newcommand{\bs}{\boldsymbol}

\newcommand{\ol}{\overleftarrow}

\newcommand{\Sch}{\mathfrak{S}}
\newcommand{\Groth}{\mathcal{G}}
\newcommand{\RW}{\mathcal{RW}}
\newcommand{\HW}{\mathcal{HW}}

\newcommand{\code}{\text{code}}
\newcommand{\dualcode}{\text{dualcode}}
\DeclareMathOperator{\FS}{FS}
\DeclareMathOperator{\BS}{BS}

\DeclareMathOperator{\supp}{supp}
\renewcommand{\supp}{\mathop{\mathrm{supp}}\hspace{0.15em}}

\newcommand{\til}[1]{\widetilde{#1}}

\newcounter{Comment}

\title{Stability of products of double Grothendieck polynomials}

\author{Andrew Hardt}
\address[A.~Hardt]{Department of Mathematics, University of Illinois Urbana-Champaign, Urbana, IL 61801}
\email{ahardt@illinois.edu}
\urladdr{https://andyhardt.github.io/}

\author{David Wallach}
\address[D.~Wallach]{Department of Mathematics, University of Illinois Urbana-Champaign, Urbana, IL 61801}
\email{davidrw3@illinois.edu}

\begin{document}

\begin{abstract}
We prove that products of double Grothendieck polynomials have the same back- and forward-stability numbers as products of Schubert polynomials, characterize which simple reflections appear in such products, and also give a new proof of a finiteness conjecture of Lam-Lee-Shimozono on products of back-stable Grothendieck polynomials which was first proved by Anderson. To do this, we use the main theorems from our recent work, as well as expansion formulas of Lenart, Fomin-Kirillov, and Lam-Lee-Shimozono.
\end{abstract}

\maketitle

\section{Introduction}

This paper concerns products of \emph{double Grothendieck polynomials}, polynomial representatives $\Groth_w(\bs{x},\bs{y})$ in two sets of variables for the classes of Schubert varieties in the equivariant K-theory of the type A flag variety $GL_n/B$. First studied by Lascoux and Sch\"utzenberger \cite{LS-Grothendieck},
their lowest-degree terms, \emph{double Schubert polynomials} \cite{LascouxSchutzenberger}, are representatives for the equivariant cohomology classes of Schubert varieties.

Double Grothendieck polynomials for $GL_n/B$ are indexed by permutations in the symmetric group $S_n$ on $n$ letters. They satisfy the stability property $\Groth_w(\bs{x},\bs{y}) = \Groth_{w\times 1}(\bs{x},\bs{y})\in S_{n+1}$, where $w\times 1$ is the permutation acting by $w$ on $1,\ldots, n$ and fixing $n+1$. Thus, we can take $w$ to be an element of $S_{\Z_+}$, the set of bijections $\Z_+\to\Z_+$ fixing all but finitely-many elements. With this convention, double Grothendieck polynomials form a basis of the polynomial ring $\Q(\bs{y})[\bs{x}]$, and their structure constants $C_{u,v}^w(\bs{y})$, given by
\begin{equation} \label{eq:double-grothendieck-structure-constants}
\Groth_u(\bs{x},\bs{y})\Groth_v(\bs{x},\bs{y}) = \sum_w C_{u,v}^w(\bs{y}) \Groth_w(\bs{x},\bs{y}),
\end{equation}
satisfy a positivity property \cite{AGM-positivity} which generalizes that of double Schubert polynomials.

In the case where $u$ and $v$ are \emph{Grassmannian permutations} with the same descent, any $w$ appearing in \eqref{eq:double-grothendieck-structure-constants} must also be Grassmannian. These polynomials are representatives for equivariant K-classes of Schubert varieties considered as subvarieties of the Grassmannian. For single Grothendieck polynomials (the non-equivariant setting, where $\bs{y}$ is set to 0), Buch \cite{Buch-K-theory} proved a tableau rule for \eqref{eq:double-grothendieck-structure-constants} in the Grassmannian case.
An extension of Buch's formula to double Grothendieck polynomials was conjectured by Thomas and Yong \cite{ThomasYong}, and then proved by Pechenik and Yong \cite{PechenikYong1}.
Combinatorial formulas exist in a small number of other cases, such as 2-step flag varieties \cite{KnutsonZJ-I}.

However, for general $u$ and $v$, it remains a longstanding open problem to give a combinatorial description of $C_{u,v}^w(\bs{y})$, and relatively little is known even about which structure constants are nonzero. Our results here concern the general case, and we provide formulas for (i) the smallest $S_n$ containing all $w$ on the right side of \eqref{eq:double-grothendieck-structure-constants}, (ii) the smallest power of $\gamma$ (see next subsection) one must apply so that \eqref{eq:double-grothendieck-structure-constants} is $\gamma$-stable, and (iii) the set of all simple reflections appearing on the right side of \eqref{eq:double-grothendieck-structure-constants}.

A family of functions has \emph{finite product expansion} (FPE) if the product of any two of them is a finite linear combination of functions in the family. The vector space spanned by any such family has a ring structure. One notable feature of Buch's formulas \cite[Theorems~5.4, 6.13]{Buch-K-theory} is that they imply that K-Stanley symmetric functions, the stable limits of Grothendieck polynomials, have FPE.
Lam, Lee, and Shimozono \cite{LamLeeShimiozono-grothendieck} showed that FPE also holds for double K-Stanley symmetric functions. 
Those authors also explored the more difficult setting of the \emph{back-stable} limit. They proved that back-stable (single) Grothendieck polynomials have FPE, and conjectured the same for back-stable double Grothendieck polynomials. Anderson \cite{Anderson-strong-positivity} proved this conjecture using geometric methods, and we give a new proof in Corollary~\ref{cor:lls-conjecture}.

\subsection{Main Results}

A \emph{permutation} of a set $A$ is a bijection $A\to A$ fixing all but finitely-many elements. Let $S_\Z$ be the group of permutations of $\Z$, $S_{\Z_+}$ be the subgroup of permutations of $\Z_+ := \{1,2,\ldots\}$, and $S_n$ be the subgroup of permutations of $\{1,2,\ldots,n\}$.

We will utilize two sets of parameters, $\bs{x} = (x_1,x_2,\ldots)$ and $\bs{y} = (\ldots,y_{-1},y_0,y_1,\ldots)$\footnote{We need the parameters $y_i$ for $i\le 0$ since they, but not the analogous $x$ parameters, show up in the back-stable structure constants \eqref{eq:bs-struct-cnst-double-groth}.}.

\begin{definition} \label{def:double-groth} 
Let $w\in S_{\Z_+}$. The \emph{double Schubert polynomial} corresponding to $w$ is
\[
\Sch_w(\bs{x},\bs{y}) = \sum_{\substack{(t_1,\ldots,t_k)\in\RW(w) \\ 1\le b_1\le\cdots\le b_k \\ t_i\le t_{i+1}\implies b_i<b_{i+1} \\ b_i\le t_i}} \prod_{i=1}^k (x_{b_i} - y_{t_i-b_i+1}),
\]
the (ordinary) \emph{Schubert polynomial} is $\Sch_w(\bs{x}) := \Sch_w(\bs{x},\bs{0})$,
the \emph{double Grothendieck polynomial} corresponding to $w$ is 
\[
\Groth_w(\bs{x}, \bs{y}) = \sum_{\substack{(t_1,\ldots,t_k)\in\HW(w) \\ 1\le b_1\le\cdots\le b_k \\ t_i\le t_{i+1}\implies b_i<b_{i+1} \\ b_i\le t_i}} (-1)^{k-\ell(w)}\prod_{i=1}^k (x_{b_i} - y_{t_i-b_i+1}),
\]
and the (ordinary) \emph{Grothendieck polynomial} is $\Groth_w(\bs{x}) := \Groth_w(\bs{x},\bs{0})$.\footnote{Notice that the inequality $t_i\le t_{i+1}\implies b_i<b_{i+1}$ differs from that of \cite{HardtWallach}, where $t_i<t_{i+1}\implies b_i<b_{i+1}$ is used. The former convention is better for working with Hecke words, while the latter is more useful when working with slide polynomials.} Here $\RW(w)$ is the set of \emph{reduced words} for $w$, and $\HW(w)$ is the set of \emph{Hecke words} for $w$ (Section~\ref{sec:permutations}).
\end{definition}

Schubert and Grothendieck polynomials each form a $\Q$-basis for $\Q[\bs{x}]$, while double Schubert polynomials and double Grothendieck polynomials each form a $\Q(\bs{y})$-basis for $\Q(\bs{y})[\bs{x}]$. Their multiplication rules
\[
\Sch_u(\bs{x})\Sch_v(\bs{x}) = \sum_{w\in S_{\Z_+}} c_{u,v}^w\Sch_w(\bs{x}),
\qquad
\Sch_u(\bs{x},\bs{y})\Sch_v(\bs{x},\bs{y}) = \sum_{w\in S_{\Z_+}} c_{u,v}^w(\bs{y})\Sch_w(\bs{x},\bs{y}),
\]
\[
\Groth_u(\bs{x})\Groth_v(\bs{x}) = \sum_{w\in S_{\Z_+}} C_{u,v}^w\Groth_w(\bs{x}),
\qquad
\Groth_u(\bs{x},\bs{y})\Groth_v(\bs{x},\bs{y}) = \sum_{w\in S_{\Z_+}} C_{u,v}^w(\bs{y})\Groth_w(\bs{x},\bs{y}),
\]
are related under the formulas:
\begin{equation} \label{eq:struct-cnst-specialization}
C_{u,v}^w = C_{u,v}^w(\bs{0}), \qquad c_{u,v}^w(\bs{y}) = \begin{cases} C_{u,v}^w(\bs{y}), & \text{if } \ell(u) + \ell(v) = \ell(w), \\ 0, & \text{otherwise,}\end{cases}
\qquad c_{u,v}^w = c_{u,v}^w(\bs{0}).
\end{equation}
In addition, $c_{u,v}^w$ is always nonnegative, while $C_{u,v}^w$ has sign $(-1)^{\ell(w)-\ell(u)-\ell(v)}$ \cite{Brion-positivity}, and the double families satisfy similar positivity conditions \cite{Graham-positivity, AGM-positivity}.

We won't need back-stable double Grothendieck polynomials, but we will use their structure constants. Let $\gamma:S_\Z\to S_\Z$ be the map $\gamma(w)(i) = w(i+1)-1$, let $\gamma(y_i) = y_{i+1}$, and let $\gamma(\bs{y}) = (\ldots,\gamma(y_{-1}),\gamma(y_0),\gamma(y_1),\ldots)$.
If $w\in S_\Z$, let
\[
\FS(w) = \max\{k\mid w(k)\ne k\},\qquad\qquad \BS(w) = 1-\min\{k\mid w(k)\ne k\}.\footnote{This slightly differs from \cite{HardtWallach}, where $\BS(w)$ is required to be nonnegative, and instead matches $\til{BS}(w)$ from that paper. Similarly, $\BS(u,v)$ in this paper corresponds to $\til{BS}(u,v)$ from \cite{HardtWallach}.}
\]
For any $u,v,w\in S_\Z$, let 
\begin{equation} \label{eq:bs-struct-cnst-double-groth}
\ol{C_{u,v}^w}(\bs{y}) = C_{\gamma^k(u),\gamma^k(v)}^{\gamma^k(w)}(\gamma^{-k}(\bs{y})), \qquad\qquad \text{for any } k\ge \BS(u),\BS(v),\BS(w).
\end{equation}
By \cite[Proposition~8.25]{LamLeeShimiozono-grothendieck}, this is well-defined, and equals the structure constant for the corresponding product of back-stable double Grothendieck polynomials. Analogous to \eqref{eq:struct-cnst-specialization}, the specializations
\begin{equation} \label{eq:bs-struct-cnst-specialization}
\ol{C_{u,v}^w} := \ol{C_{u,v}^w}(\bs{0}), \quad \ol{c_{u,v}^w}(\bs{y}) := \begin{cases} \ol{C_{u,v}^w}(\bs{y}), & \text{if } \ell(u) + \ell(v) = \ell(w), \\ 0, & \text{otherwise,} \end{cases}
\qquad \ol{c_{u,v}^w} := \ol{c_{u,v}^w}(\bs{0})
\end{equation}
give the structure constants for back-stable Grothendieck polynomials, back-stable double Schubert polynomials, and back-stable Schubert polynomials, respectively \cite{LamLeeShimiozono-schubert,LamLeeShimiozono-grothendieck}.

If $u,v\in S_\Z$, the forward- and back-stability number
for the above products are
\begin{align*}
&\FS(u,v) = \max\{\FS(w)\mid c_{u,v}^w\ne 0\}, \qquad
&&\BS(u,v) = \max\{\BS(w)\mid \ol{c_{u,v}^w}\ne 0\},
\\&
\FS^D(u,v) = \max\{\FS(w)\mid c_{u,v}^w(\bs{y})\ne 0\}, \qquad
&&\BS^D(u,v) = \max\{\BS(w)\mid \ol{c_{u,v}^w}(\bs{y})\ne 0\},
\\&
\FS_K(u,v) = \max\{\FS(w)\mid C_{u,v}^w\ne 0\}, \qquad
&&\BS_K(u,v) = \max\{\BS(w)\mid \ol{C_{u,v}^w}\ne 0\},
\\&
\FS_K^D(u,v) = \max\{\FS(w) \mid C_{u,v}^w(\bs{y})\ne 0\}, \qquad
&&\BS_K^D(u,v) = \max\{\BS(w) \mid \ol{C_{u,v}^w}(\bs{y})\ne 0\}.
\end{align*}

Our main result is:

\begin{theorem} \label{thm:double-grothendieck-stability}
For all $u,v\in S_{\Z_+}$,
\begin{enumerate}
    \item[\textup{(a)}] $\FS(u,v) = \Xi(u,v)$ and $\BS(u,v) = \Omega(u,v)$,
    \item[\textup{(b)}] $\FS^D(u,v) = \Xi(u,v)$ and $\BS^D(u,v) = \Omega(u,v)$,
    \item[\textup{(c)}] $\FS_K(u,v) = \Xi(u,v)$ and $\BS_K(u,v) = \Omega(u,v)$,
    \item[\textup{(d)}] $\FS^D_K(u,v) = \Xi(u,v)$ and $\BS^D_K(u,v) = \Omega(u,v)$,
\end{enumerate}
where $\Xi(u,v), \Omega(u,v)$ are given by \eqref{eq:Xi-Omega-def}. 
\end{theorem}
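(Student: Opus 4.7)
The plan is to first dispatch part (a), the pure Schubert case, and then leverage the specialization relations \eqref{eq:struct-cnst-specialization} and \eqref{eq:bs-struct-cnst-specialization} to squeeze the remaining six quantities between two equal bounds. Part (a) is the case treated in the authors' earlier work \cite{HardtWallach} and I will invoke it directly; with it in hand, the specializations $C_{u,v}^w(\bs{0}) = C_{u,v}^w$ and the top-degree identity $c_{u,v}^w(\bs{y}) = C_{u,v}^w(\bs{y})$ in length $\ell(u)+\ell(v)$ give the chain
\[
\FS(u,v) \le \FS^D(u,v),\ \FS_K(u,v) \le \FS_K^D(u,v),
\]
because a nonzero $c_{u,v}^w$ forces each of $C_{u,v}^w$ and $c_{u,v}^w(\bs{y})$ nonzero, which each force $C_{u,v}^w(\bs{y})$ nonzero. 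The analogous chain holds in the back-stable setting, so parts (b), (c), (d) all reduce to the two upper bounds $\FS_K^D(u,v) \le \Xi(u,v)$ and $\BS_K^D(u,v) \le \Omega(u,v)$.

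These upper bounds I would attack by expanding the product $\Groth_u(\bs{x},\bs{y})\Groth_v(\bs{x},\bs{y})$ in a basis whose stability is already known. The natural route uses Lenart's transition formula between $\Groth_w$ and $\Sch_w$, which writes each double Grothendieck polynomial as a signed sum of double Schubert polynomials indexed by permutations above $w$ in Bruhat order; combining this with the Fomin--Kirillov/Lam--Lee--Shimozono Hecke-word formulas for the structure constants $C_{u,v}^w(\bs{y})$ identifies each $w$ with $C_{u,v}^w(\bs{y}) \ne 0$ as the Demazure product of a shuffle of Hecke words for $u$ and $v$. From this combinatorial model, the simple reflections appearing in $w$ lie in the union of those appearing in Hecke words for $u$ and $v$, which coincides with the union of those appearing in \emph{reduced} words for $u$ and $v$. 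Part (a) then bounds $\FS(w)$ and $\BS(w)$ by $\Xi(u,v)$ and $\Omega(u,v)$ respectively, via the translation between simple reflection indices and the $\FS/\BS$ statistics.

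The principal obstacle is justifying the jump from the Schubert to the Grothendieck setting: Hecke words may be longer than reduced words, so one must verify that this extra length does not produce new simple reflections in $w$ beyond those required for the Schubert product. A clean way is an induction on $\ell(u)+\ell(v)$ via a Monk-type rule: show that $\Groth_u(\bs{x},\bs{y})\Groth_{s_k}(\bs{x},\bs{y})$ involves only permutations whose support lies in $\supp(u)\cup\{s_k\}$, and iterate. The back-stable version is handled by $\gamma$-shifting and applying the same argument to $\gamma^k(u),\gamma^k(v),\gamma^k(w)$ as in \eqref{eq:bs-struct-cnst-double-groth}, which yields both $\BS_K^D(u,v) \le \Omega(u,v)$ and, as a byproduct, implies the Lam--Lee--Shimozono finiteness conjecture appearing as Corollary~\ref{cor:lls-conjecture}.
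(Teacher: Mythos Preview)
Your reduction via \eqref{eq:trivial-stability-inequalities} to the two upper bounds $\FS_K^D(u,v)\le\Xi(u,v)$ and $\BS_K^D(u,v)\le\Omega(u,v)$ is correct, but the argument you sketch for those bounds contains a genuine error. You claim that any $w$ with $C_{u,v}^w(\bs{y})\ne 0$ is the Demazure product of a shuffle of Hecke words for $u$ and $v$, and hence $\supp w\subseteq\supp u\cup\supp v$. This is false already for ordinary Schubert polynomials: with $u=v=s_1$ one has $\Sch_u\Sch_v=x_1^2=\Sch_{312}$, and $\supp(312)=\{1,2\}\not\subseteq\{1\}$. The product of two compatible-sequence sums as in Definition~\ref{def:double-groth} is \emph{not} indexed by shuffles of the underlying words; the bounding sequences $(b_i)$ interact with the monomials, and re-expanding in the Schubert or Grothendieck basis introduces new simple reflections. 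Your proposed Monk-rule induction fails for the same reason, at the very first step. Since $\Xi(u,v)$ is typically strictly larger than $\max(\FS(u),\FS(v))$, no support-containment statement of this kind can hold, and ``Part (a) then bounds $\FS(w)$'' has no content once the support claim is dropped.

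What the paper actually does is finer than a support bound: it controls the indicator functions $\Theta_i$ and $\theta_i$ of the (dual) Lehmer code, which are exactly the ingredients in the formulas \eqref{eq:Xi-Omega-def}. For part~(c), one expands $\Groth_w=\sum_x a_{x,w}\Sch_x$ and proves, via Lenart's binary-tableau description of $a_{x,w}$, that $a_{x,w}\ne 0$ forces $\Theta_i(x)\le\Theta_i(w)$ for all $i$ (Lemma~\ref{lem:grothendieck-expansion-dual-lehmer}), hence $\Lambda_i(x)\le\Lambda_i(w)$. Writing $\Groth_u\Groth_v=\sum_{x,y}a_{x,u}a_{y,v}\Sch_x\Sch_y$ and using sign-coherence of all terms to rule out cancellation, part~(a) gives $\FS(x,y)=\Xi(x,y)\le\Xi(u,v)$ for every contributing pair. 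The back-stable half of (c) is deduced from the forward half via the symmetry $C_{u,v}^w=C_{w_0uw_0,w_0vw_0}^{w_0ww_0}$ and \eqref{eq:code-dualcode-relation}. Part~(d) then expands $\Groth_u(\bs{x},\bs{y})$ in single Grothendieck polynomials (Proposition~\ref{prop:double-grothendieck-expansion}), where the relevant permutations $a$ satisfy $a\le_L u$; the input here is Lemma~\ref{lem:weak-bruhat-Lambda}, that $a\le_L u$ implies $\Theta_j(a)\le\Theta_j(u)$ and $\theta_j(a)\le\theta_j(u)$, again feeding directly into $\Xi$ and $\Omega$.
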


Part (a) of Theorem~\ref{thm:double-grothendieck-stability} is \cite[Theorem~1.6, Proposition~6.3]{HardtWallach}, while the other parts are new. Part (d) implies the other parts; however, we will first use (a) to prove (c), and use that to prove (d) (and therefore (b) will follow). By \eqref{eq:struct-cnst-specialization}, we have
\begin{subequations} \label{eq:trivial-stability-inequalities}
\begin{align}
\FS(u,v)\le \FS_K(u,v)\le \FS^D_K(u,v), \qquad \FS(u,v)\le \FS^D(u,v)\le \FS^D_K(u,v),
\\
\BS(u,v)\le \BS_K(u,v)\le \BS^D_K(u,v), \qquad \BS(u,v)\le \BS^D(u,v)\le \BS^D_K(u,v),
\end{align}
\end{subequations}
so we only need to prove one direction of each equality.

\begin{remark}
The quantity $\FS^D_K(u,v)$ is the largest integer moved by any permutation showing up in the double Grothendieck product, while $1-\BS^D_K(u,v)$ is the smallest integer moved by any permutation showing up in the back-stable double Grothendieck product (and similarly for the other cases).

One of these definitions uses back-stable structure constants while the other does not. To understand the behavior in the other cases, one can show that
\[
\max\{\BS(w) \mid C_{u,v}^w(\bs{y})\ne 0\} = \min(\BS_K^D(u,v),0),
\]
and
\[
\max\{\FS(w) \mid \ol{C_{u,v}^w}(\bs{y})\ne 0\} = \FS_K^D(u,v).
\]
The first equation follows from the definitions. The second is not at all obvious, but is a consequence of \cite[Lemma~6.7]{HardtWallach} combined with \eqref{eq:bs-struct-cnst-double-groth} and Theorem~\ref{thm:double-grothendieck-stability}.
\end{remark}

Lam, Lee, and Shimozono showed \cite[(8.24)]{LamLeeShimiozono-grothendieck} that a product of back-stable double Grothendieck polynomials is always a linear combination of back-stable double Grothendieck polynomials, with coefficients given by \eqref{eq:bs-struct-cnst-double-groth}. They conjectured that the expansion was finite. This conjecture was proved by Anderson \cite{Anderson-strong-positivity} as a consequence of a general theorem on equivariant positivity.

\begin{corollary}[{\!\!\cite[Conjecture~8.27]{LamLeeShimiozono-grothendieck}, \cite[Corollary of Theorem B]{Anderson-strong-positivity}}] \label{cor:lls-conjecture}
For all $u,v\in S_\Z$, there are only finitely many $w$ such that $\ol{C_{u,v}^w}(\bs{y})\ne 0$.
\end{corollary}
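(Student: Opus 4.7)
The plan is to reduce the back-stable setting to the non-back-stable one via \eqref{eq:bs-struct-cnst-double-groth}, invoke Theorem~\ref{thm:double-grothendieck-stability}(d) to bound the supports of the permutations $w$ that can appear, and use a translation-equivariance argument to eliminate the dependence on the shift parameter.

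Fix $u,v\in S_\Z$, and suppose $w\in S_\Z$ satisfies $\ol{C_{u,v}^w}(\bs{y})\ne 0$. I would set $k=\max(\BS(u),\BS(v),\BS(w))$, so that $\gamma^k u,\gamma^k v,\gamma^k w\in S_{\Z_+}$ and, by \eqref{eq:bs-struct-cnst-double-groth}, $C_{\gamma^k u,\gamma^k v}^{\gamma^k w}(\gamma^{-k}\bs{y})\ne 0$. Theorem~\ref{thm:double-grothendieck-stability}(d) then yields
\[
\FS(\gamma^k w)\le \Xi(\gamma^k u,\gamma^k v)\qquad\text{and}\qquad \BS(\gamma^k w)\le \Omega(\gamma^k u,\gamma^k v).
\]
Since $\FS(\gamma^k x)$ and $\BS(\gamma^k x)$ differ from $\FS(x)$ and $\BS(x)$ by $\pm k$ (elementary from the definition of $\gamma$), these translate into bounds on $\FS(w)$ and $\BS(w)$ that a priori depend on $k$.

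To remove this dependence, I would establish the translation equivariance $\Xi(\gamma u',\gamma v')=\Xi(u',v')\pm 1$ and $\Omega(\gamma u',\gamma v')=\Omega(u',v')\mp 1$ for $u',v'\in S_{\Z_+}$. This follows either by reading off the explicit formula \eqref{eq:Xi-Omega-def}, or from the translation invariance $\ol{c_{\gamma u,\gamma v}^{\gamma w}}=\ol{c_{u,v}^w}$ of back-stable Schubert structure constants combined with Theorem~\ref{thm:double-grothendieck-stability}(a). With this in hand, the bounds become $k$-independent: $\FS(w)\le C_1(u,v)$ and $\BS(w)\le C_2(u,v)$ for constants depending only on $u$ and $v$. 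Any such $w$ has support in the finite interval $\{1-C_2(u,v),\ldots,C_1(u,v)\}$, so only finitely many $w$ satisfy $\ol{C_{u,v}^w}(\bs{y})\ne 0$. The main obstacle is the translation-equivariance step, but it should be essentially transparent from \eqref{eq:Xi-Omega-def}.
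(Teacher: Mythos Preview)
Your proposal is correct and is essentially the paper's first proof, just with the translation-equivariance step spelled out more explicitly: the paper simply asserts that the bounds from Theorem~\ref{thm:double-grothendieck-stability}(d) are ``unchanged if we apply $\gamma$ to $u$ and $v$,'' which is exactly your observation that $\Xi$ and $\Omega$ shift in lockstep with $\FS$ and $\BS$ under $\gamma$. One small imprecision: you state the equivariance only for $u',v'\in S_{\Z_+}$, but to relate $\Xi(\gamma^k u,\gamma^k v)$ back to $\Xi(u,v)$ you need it along the entire $\gamma$-orbit, so for all of $S_\Z$; since \eqref{eq:Xi-Omega-def} is defined on $S_\Z$, this is harmless.
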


Our work yields a new proof of this result which relies on older positivity results \cite{Brion-positivity}, but is otherwise combinatorial. We give two related arguments.

\begin{proof}
This follows from Theorem~\ref{thm:double-grothendieck-stability}(d), since there are only finitely-many permutations $w$ with $\BS(w)\le \Omega(u,v)$ and $\FS(w)\le \Xi(u,v)$, and this number is unchanged if we apply $\gamma$ to $u$ and $v$.

Alternatively, one can bypass Theorem~~\ref{thm:double-grothendieck-stability} by noting that there are only finitely-many permutations $w$ which appear on the right side of \eqref{eq:C-uv-w-y-expansion}, and by \cite[Theorem~8.11]{LamLeeShimiozono-grothendieck}, this number stabilizes after repeated applications of $\gamma$.\qedhere
\end{proof}

We also prove a formula (Corollary~\ref{cor:integer-support}) for the set of simple reflections appearing on the right side of the double Grothendieck product. In the case of single Schubert polynomials and dominant or Grassmannian permutations, this reduces to well-known facts.

\subsection*{Acknowledgements}

The authors are very grateful to Alexander Yong for suggesting we try to extend our previous results to this setting, as well as pointing us towards the expansion formula in \cite{Lenart-noncommutative}. We also thank Thomas Lam for making us aware of \cite{Anderson-strong-positivity}, and Cristian Lenart and Anna Weigandt for helpful discussions. We thank the referees for helpful comments and suggestions. This work benefited from computations using \textsc{SageMath}.

Both authors were partially supported by NSF RTG grant DMS-1937241. D.~W. was supported through the ICLUE program at the University of Illinois Urbana-Champaign.

\section{Permutations} \label{sec:permutations}

Let $s_i$ be the permutation with $s(i)=i+1, s(i+1)=i$, and for all other $j\in\Z$, $s(j)=j$. The $s_i$ generate $S_\Z$ and satisfy
\begin{equation} \label{eq:coxeter-relations}
s_i^2 = \text{id}, \qquad s_is_{i+1}s_i = s_{i+1}s_is_{i+1}, \qquad s_is_j = s_js_i \text{ if } |i-j|>1.
\end{equation}
If $w = s_{i_1}\cdots s_{i_k}$, then $(i_1,\ldots,i_k)$ is a \emph{word} for $w$. If $k$ is minimal among all such words, then $(i_1,\ldots,i_k)$ is a \emph{reduced word}, and the \emph{(Coxeter) length} of $w$ is $\ell(w) = k$. Let $\RW(w)$ be the set of all reduced words for $w$. Let $w_0^{(n)}$ be the unique permutation in $S_n$ of maximal length.

We also define the
\emph{0-Hecke product} (or \emph{Demazure product}):
\begin{equation} \label{eq:hecke-relations}
s_i\ast s_i = s_i, \qquad s_i\ast s_{i+1}\ast s_i = s_{i+1}\ast s_i\ast s_{i+1}, \qquad s_i\ast s_j = s_j\ast s_i \text{ if } |i-j|>1.
\end{equation}
If $w = s_{i_1}\ast\cdots \ast s_{i_k}$, then $(i_1,\ldots,i_k)$ is a \emph{Hecke word} for $w$. Let $\HW(w)$ be the set of all Hecke words for $w$.

The \emph{(strong) Bruhat order} $\le$ is the partial order on $S_\Z$ such that $u\le v$ if for some (equivalently, for every) reduced word $(i_1,\ldots,i_\ell)\in\RW(v)$, $u$ has a reduced word of the form $(i_{j_1},i_{j_2},\ldots,i_{j_k})$ where $j_1<j_2<\cdots<j_k$. The \emph{(weak) left Bruhat order} $\le_L$ is the partial order on $S_\Z$ such that $u\le_L v$ if $\ell(v) = \ell(vu^{-1}) + \ell(u)$, and the \emph{(weak) right Bruhat order} $\le_R$ is the partial order such that $u\le_L v$ if $\ell(v) = \ell(u^{-1}v) + \ell(u)$.
The \emph{one-line notation} of $w\in S_n$ is the sequence $w(1),\ldots,w(n)$, and we omit the commas as is standard practice.

\begin{definition} \label{def:lehmer-code}
The \emph{Lehmer code} is the doubly-infinite sequence
\[
\code(w) = (\ldots, c_{-1}, c_0, c_1, \ldots),
\]
where
\[
c_i := \code(w)_i = |\{j\in \Z \mid j>i, w(j)<w(i)\}|.
\]
The \emph{dual Lehmer code} is the doubly-infinite sequence
\[
\text{dualcode}(w) = (\ldots, d_{-1}, d_0, d_1, \ldots),
\]
where
\[
d_i := \text{dualcode}(w)_i = |\{j\in \Z \mid j<i, w(j)>w(i)\}|.
\]
Let
\[
\theta_i(w) = \begin{cases} 1, & \text{if code}(w)_i > 0,\\ 0, & \text{otherwise},\end{cases} \qquad\qquad \lambda_i(w) = \sum_{j\le i} \theta_j(w),
\]
and
\[\Theta_i(w) = \begin{cases}
    1 & \text{if dualcode}(w)_i>0, \\
    0 & \text{otherwise},
\end{cases}
\qquad\qquad
\Lambda_i(w) = \sum_{j\ge i} \Theta_j(w).
\]
\end{definition}

Let
\begin{subequations}  \label{eq:Xi-Omega-def}
\begin{align}
\Xi(u,v) = \max_{i\le 1+\max(\FS(u),\FS(v))}(\Lambda_i(u)+\Lambda_i(v)+i-1),
\\
\Omega(u,v) = \max_{i \geq -\max(\BS(u),\BS(v))}(\lambda_i(u) + \lambda_i(v) - i),
\end{align}
\end{subequations}
and by \cite[Theorem~1.6, Proposition~6.3]{HardtWallach}, $\FS(u,v) = \Xi(u,v)$ and $\BS(u,v) = \Omega(u,v)$.

\begin{example}
Let $u = 2147653$ and $v = 1547236$. In Figure~\ref{fig:main-example}, we compute $\Omega(u,v)=2$ and $\Xi(u,v)=10$ using the \emph{Rothe} and \emph{dual Rothe} diagrams of $u$ and $v$. For a permutation $w$, these diagrams are constructed by placing a dot in row $i$ of column $w(i)$, and extending lines to the bottom and right for the Rothe diagram, or top and left for the dual Rothe diagram. The number of boxes in row $i$ of the Rothe (resp. dual Rothe diagram) is $\code_i(w)$ (resp. $\dualcode_i(w)$), so the nonempty rows correspond to values of $i$ where $\theta_i(w)$ (resp. $\Theta_i(w)$) is nonzero.

$\lambda_i(w)$ can be computed by counting the nonempty rows in the Rothe diagram for $w$ from top to bottom, while $\Lambda_i(w)$ can be computed by counting the nonempty rows in the dual Rothe diagram from bottom to top. We write these numbers for each $i$ for both $u$ and $v$ to the left of the diagrams. The right column in the upper (resp. lower) portion of the figure lists the values of $\lambda_i(u)+\lambda_i(v)-i$ (resp. $\Lambda_i(u)+\Lambda_i(v)+i-1$) for each $i$, and we highlight the maximum value(s), which equals $\Omega(u,v)$ (resp. $\Xi(u,v)$).

\begin{figure}[h]
\begin{center}
\scalebox{.6}{$
\begin{array}{c@{\hspace{50pt}}c}
\begin{tikzpicture}
  \path[fill=black] (2,-1) circle (.1);
  \path[fill=black] (1,-2) circle (.1);
  \path[fill=black] (4,-3) circle (.1);
  \path[fill=black] (7,-4) circle (.1);
  \path[fill=black] (6,-5) circle (.1);
  \path[fill=black] (5,-6) circle (.1);
  \path[fill=black] (3,-7) circle (.1);
  \draw (2,-8)--(2,-1)--(8,-1);
  \draw (1,-8)--(1,-2)--(8,-2);
  \draw (4,-8)--(4,-3)--(8,-3);
  \draw (7,-8)--(7,-4)--(8,-4);
  \draw (6,-8)--(6,-5)--(8,-5);
  \draw (5,-8)--(5,-6)--(8,-6);
  \draw (3,-8)--(3,-7)--(8,-7);
  \draw[draw=black,line width=0.5mm] (0.5,-0.5) rectangle ++(1,-1);
  \draw[draw=black,line width=0.5mm] (2.5,-2.5) rectangle ++(1,-1);
  \draw[draw=black,line width=0.5mm] (2.5,-3.5) rectangle ++(1,-1);
  \draw[draw=black,line width=0.5mm] (4.5,-3.5) rectangle ++(1,-1);
  \draw[draw=black,line width=0.5mm] (5.5,-3.5) rectangle ++(1,-1);
  \draw[draw=black,line width=0.5mm] (2.5,-4.5) rectangle ++(1,-1);
  \draw[draw=black,line width=0.5mm] (4.5,-4.5) rectangle ++(1,-1);
  \draw[draw=black,line width=0.5mm] (2.5,-5.5) rectangle ++(1,-1);
  \node at (-0.5,0) {{\Large $\lambda_i(u)$}};
  \node at (-0.5,-1) {{\Large $1$}};
  \node at (-0.5,-2) {{\Large $1$}};
  \node at (-0.5,-3) {{\Large $2$}};
  \node at (-0.5,-4) {{\Large $3$}};
  \node at (-0.5,-5) {{\Large $4$}};
  \node at (-0.5,-6) {{\Large $5$}};
  \node at (-0.5,-7) {{\Large $5$}};
  \end{tikzpicture}
  &
\begin{tikzpicture}
  \path[fill=black] (1,-1) circle (.1);
  \path[fill=black] (5,-2) circle (.1);
  \path[fill=black] (4,-3) circle (.1);
  \path[fill=black] (7,-4) circle (.1);
  \path[fill=black] (2,-5) circle (.1);
  \path[fill=black] (3,-6) circle (.1);
  \path[fill=black] (6,-7) circle (.1);
  \draw (1,-8)--(1,-1)--(8,-1);
  \draw (5,-8)--(5,-2)--(8,-2);
  \draw (4,-8)--(4,-3)--(8,-3);
  \draw (7,-8)--(7,-4)--(8,-4);
  \draw (2,-8)--(2,-5)--(8,-5);
  \draw (3,-8)--(3,-6)--(8,-6);
  \draw (6,-8)--(6,-7)--(8,-7);
  \draw[draw=black,line width=0.5mm] (1.5,-1.5) rectangle ++(1,-1);
  \draw[draw=black,line width=0.5mm] (1.5,-2.5) rectangle ++(1,-1);
  \draw[draw=black,line width=0.5mm] (1.5,-3.5) rectangle ++(1,-1);
  \draw[draw=black,line width=0.5mm] (2.5,-1.5) rectangle ++(1,-1);
  \draw[draw=black,line width=0.5mm] (2.5,-2.5) rectangle ++(1,-1);
  \draw[draw=black,line width=0.5mm] (2.5,-3.5) rectangle ++(1,-1);
  \draw[draw=black,line width=0.5mm] (3.5,-1.5) rectangle ++(1,-1);
  \draw[draw=black,line width=0.5mm] (5.5,-3.5) rectangle ++(1,-1);
  \node at (-0.5,0) {{\Large $\lambda_i(v)$}};
  \node at (-0.5,-1) {{\Large $0$}};
  \node at (-0.5,-2) {{\Large $1$}};
  \node at (-0.5,-3) {{\Large $2$}};
  \node at (-0.5,-4) {{\Large $3$}};
  \node at (-0.5,-5) {{\Large $3$}};
  \node at (-0.5,-6) {{\Large $3$}};
  \node at (-0.5,-7) {{\Large $3$}};
  
  \node at (10,0) {{\Large $-i$}};
  \node at (10,-1) {{\Large $-1$}};
  \node at (10,-2) {{\Large $-2$}};
  \node at (10,-3) {{\Large $-3$}};
  \node at (10,-4) {{\Large $-4$}};
  \node at (10,-5) {{\Large $-5$}};
  \node at (10,-6) {{\Large $-6$}};
  \node at (10,-7) {{\Large $-7$}};
  
  \node at (12.5,0) {{\Large Total}};
  \node at (12.5,-1) {{\Large $0$}};
  \node at (12.5,-2) {{\Large $0$}};
  \node at (12.5,-3) {{\Large $1$}};
  \node at (12.5,-4) {{\Large $2$}};
  \node at (12.5,-5) {{\Large $2$}};
  \node at (12.5,-6) {{\Large $2$}};
  \node at (12.5,-7) {{\Large $1$}};
  \fill[orange,opacity=0.3] (12.5,-4) circle (.4);
  \fill[orange,opacity=0.3] (12.5,-5) circle (.4);
  \fill[orange,opacity=0.3] (12.5,-6) circle (.4);
  \end{tikzpicture}
\\\\
  \begin{tikzpicture}
  \path[fill=black] (2,-1) circle (.1);
  \path[fill=black] (1,-2) circle (.1);
  \path[fill=black] (4,-3) circle (.1);
  \path[fill=black] (7,-4) circle (.1);
  \path[fill=black] (6,-5) circle (.1);
  \path[fill=black] (5,-6) circle (.1);
  \path[fill=black] (3,-7) circle (.1);
  \draw (2,0)--(2,-1)--(0,-1);
  \draw (1,0)--(1,-2)--(0,-2);
  \draw (4,0)--(4,-3)--(0,-3);
  \draw (7,0)--(7,-4)--(0,-4);
  \draw (6,0)--(6,-5)--(0,-5);
  \draw (5,0)--(5,-6)--(0,-6);
  \draw (3,0)--(3,-7)--(0,-7);
  \draw[draw=black,line width=0.5mm] (1.5,-1.5) rectangle ++(1,-1);
  \draw[draw=black,line width=0.5mm] (6.5,-4.5) rectangle ++(1,-1);
  \draw[draw=black,line width=0.5mm] (3.5,-6.5) rectangle ++(1,-1);
  \draw[draw=black,line width=0.5mm] (4.5,-6.5) rectangle ++(1,-1);
  \draw[draw=black,line width=0.5mm] (5.5,-5.5) rectangle ++(1,-1);
  \draw[draw=black,line width=0.5mm] (5.5,-6.5) rectangle ++(1,-1);
  \draw[draw=black,line width=0.5mm] (6.5,-5.5) rectangle ++(1,-1);
  \draw[draw=black,line width=0.5mm] (6.5,-6.5) rectangle ++(1,-1);
  \node at (-0.5,0) {{\Large $\Lambda_i(u)$}};
  \node at (-0.5,-1) {{\Large $4$}};
  \node at (-0.5,-2) {{\Large $4$}};
  \node at (-0.5,-3) {{\Large $3$}};
  \node at (-0.5,-4) {{\Large $3$}};
  \node at (-0.5,-5) {{\Large $3$}};
  \node at (-0.5,-6) {{\Large $2$}};
  \node at (-0.5,-7) {{\Large $1$}};
  \end{tikzpicture}
  &
  \begin{tikzpicture}
  \path[fill=black] (1,-1) circle (.1);
  \path[fill=black] (5,-2) circle (.1);
  \path[fill=black] (4,-3) circle (.1);
  \path[fill=black] (7,-4) circle (.1);
  \path[fill=black] (2,-5) circle (.1);
  \path[fill=black] (3,-6) circle (.1);
  \path[fill=black] (6,-7) circle (.1);
  \draw (1,0)--(1,-1)--(0,-1);
  \draw (5,0)--(5,-2)--(0,-2);
  \draw (4,0)--(4,-3)--(0,-3);
  \draw (7,0)--(7,-4)--(0,-4);
  \draw (2,0)--(2,-5)--(0,-5);
  \draw (3,0)--(3,-6)--(0,-6);
  \draw (6,0)--(6,-7)--(0,-7);
  \draw[draw=black,line width=0.5mm] (3.5,-4.5) rectangle ++(1,-1);
  \draw[draw=black,line width=0.5mm] (3.5,-5.5) rectangle ++(1,-1);
  \draw[draw=black,line width=0.5mm] (4.5,-2.5) rectangle ++(1,-1);
  \draw[draw=black,line width=0.5mm] (4.5,-4.5) rectangle ++(1,-1);
  \draw[draw=black,line width=0.5mm] (4.5,-5.5) rectangle ++(1,-1);
  \draw[draw=black,line width=0.5mm] (6.5,-4.5) rectangle ++(1,-1);
  \draw[draw=black,line width=0.5mm] (6.5,-5.5) rectangle ++(1,-1);
  \draw[draw=black,line width=0.5mm] (6.5,-6.5) rectangle ++(1,-1);
  \node at (-0.5,0) {{\Large $\Lambda_i(v)$}};
  \node at (-0.5,-1) {{\Large $4$}};
  \node at (-0.5,-2) {{\Large $4$}};
  \node at (-0.5,-3) {{\Large $4$}};
  \node at (-0.5,-4) {{\Large $3$}};
  \node at (-0.5,-5) {{\Large $3$}};
  \node at (-0.5,-6) {{\Large $2$}};
  \node at (-0.5,-7) {{\Large $1$}};
  
  \node at (10,0) {{\Large $i-1$}};
  \node at (10,-1) {{\Large $0$}};
  \node at (10,-2) {{\Large $1$}};
  \node at (10,-3) {{\Large $2$}};
  \node at (10,-4) {{\Large $3$}};
  \node at (10,-5) {{\Large $4$}};
  \node at (10,-6) {{\Large $5$}};
  \node at (10,-7) {{\Large $6$}};
  
  \node at (12.5,0) {{\Large Total}};
  \node at (12.5,-1) {{\Large $8$}};
  \node at (12.5,-2) {{\Large $9$}};
  \node at (12.5,-3) {{\Large $9$}};
  \node at (12.5,-4) {{\Large $9$}};
  \node at (12.5,-5) {{\Large $10$}};
  \node at (12.5,-6) {{\Large $9$}};
  \node at (12.5,-7) {{\Large $8$}};
  \fill[orange,opacity=0.3] (12.5,-5) circle (.45);
  \end{tikzpicture}
  \end{array}$}
\end{center}
\caption{Top: Rothe diagrams for the permutations $u = 2147653$ and $v = 1547236$, along with the computation $\Omega(u,v) = 2$. Bottom: dual Rothe diagrams for $u$ and $v$, along with the computation $\Xi(u,v) = 10$.}
\label{fig:main-example}
\end{figure}

\end{example}

\section{Proof of the main theorem} \label{sec:double-grothendieck}

\subsection{Grothendieck stability} \label{sec:groth-stability}

Grothendieck polynomials can be expanded in the Schubert basis,
\[
\Groth_w(\bs{x}) = \sum_{x\in S_{\Z_+}} a_{x,w} \Sch_x(\bs{x}),
\]
for some constants $a_{x,w}$. We have $a_{w,w}=1$ for all $w$, and $a_{x,w}$ has sign $(-1)^{\ell(x)-\ell(w)}$ \cite{Lenart-noncommutative}.

The forward-stability part of Theorem~\ref{thm:double-grothendieck-stability}(c) is a consequence of the following lemma:

\begin{lemma} \label{lem:grothendieck-expansion-dual-lehmer}
Suppose that $a_{x,w}\ne 0$. Then $\Theta_i(x) \le \Theta_i(w)$ for all $i$.
\end{lemma}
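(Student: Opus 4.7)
The plan is to invoke Lenart's expansion formula \cite{Lenart-noncommutative}, which expresses $a_{x,w}$ as a signed count of certain saturated chains in Bruhat order from $w$ to $x$ whose covers are controlled by an explicit combinatorial constraint. In this formulation, $a_{x,w}\ne 0$ forces the existence of at least one admissible chain, and properties of the chain can be translated into structural constraints relating $x$ and $w$.

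First, I would rephrase the conclusion in the contrapositive. Since $\Theta_i\in\{0,1\}$, the inequality $\Theta_i(x)\le\Theta_i(w)$ is equivalent to the implication that if $w(i)$ is a left-to-right maximum of $w(1),\ldots,w(i)$, then $x(i)$ must be a left-to-right maximum of $x(1),\ldots,x(i)$. I would fix such a position $i$ with $\Theta_i(w)=0$ and follow an admissible chain $w=w_0\lessdot w_1\lessdot\cdots\lessdot w_p=x$, tracking the values $w_j(1),\ldots,w_j(i)$ step by step.

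Each cover $w_{j-1}\lessdot w_j=w_{j-1}\cdot t_{a_j,b_j}$ with $a_j<b_j$ falls into one of three cases: (i) $b_j\le i$, (ii) $a_j>i$, or (iii) $a_j\le i<b_j$. Cases (i) and (ii) act entirely within or entirely outside of the block $\{1,\ldots,i\}$, and in each case the maximum of $w_j(1),\ldots,w_j(i)$ is routinely seen to remain at position $i$. In case (iii) the Bruhat cover condition gives $w_{j-1}(a_j)<w_{j-1}(b_j)$ with no value of $w_{j-1}$ at an intermediate index lying strictly between $w_{j-1}(a_j)$ and $w_{j-1}(b_j)$; this sharply constrains how the value entering position $a_j$ can compare to the current value at position $i$, and, combined with Lenart's ordering condition on the sequence $(a_j,b_j)$ along the chain, it should force the running maximum at position $i$ never to be displaced.

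The main obstacle is precisely this mixed case (iii): a generic single Bruhat cover of this form can turn a left-to-right maximum at $i$ into a non-maximum, so the preservation argument depends crucially on the full strength of Lenart's monotonicity hypothesis rather than on the cover condition alone. To manage the bookkeeping, I would recast Lenart's chain data in the binary-tableau language developed in \cite{HardtWallach}, from which the dual Lehmer code of each $w_j$ can be read off directly; the desired inequality then reduces to a monotonicity statement that can be verified by induction on the number of covers.
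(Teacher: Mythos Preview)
Your proposal has a genuine gap at exactly the point you yourself flag as the ``main obstacle.'' In case~(iii), where the cover $t_{a_j,b_j}$ straddles position $i$, you correctly observe that a generic Bruhat cover can destroy the left-to-right maximum at $i$; you then assert that Lenart's ordering condition ``should force'' the running maximum never to be displaced, but you give no argument for this. That is the entire content of the lemma, so the plan as written does not constitute a proof.

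There is also a foundational issue. Lenart's Theorem~5.3, as used here, characterizes $a_{x,w}\ne 0$ via \emph{binary tableaux} $T$ with $u(T)=w$ and $v(T)=xw_0^{(n)}$, not via saturated chains with a monotonicity condition on the transpositions. The chain-style condition you seem to be invoking is close to the one discussed in the footnote after the statement of Lenart's theorem, where it is noted that the sufficiency direction is \emph{false} (Weigandt's counterexample $w=1423$, $x=3412$) and that necessity---which is precisely the direction you would need---remains open. So unless you can point to a genuinely different chain formulation in \cite{Lenart-noncommutative} with the required monotonicity, your starting point is not available. (Also, the binary-tableau language is Lenart's, not from \cite{HardtWallach}.)

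The paper's proof avoids chains entirely. It works directly with the binary tableau $T$ and inducts on $n$: delete the diagonal entries $t_{ii}$ to obtain a smaller tableau $\bar T$, apply the inductive hypothesis to $\bar u=u(\bar T)$ and $\bar v=v(\bar T)w_0^{(n-1)}$, and then recover the inequality for $u,v$ via a four-case analysis depending on whether $a-1$ and $a$ lie in the set $I$ coming from \cite[(5.9)]{Lenart-noncommutative}. The structural identities relating $u',v'$ to $\bar u,\bar v$ (a shift on the $u$-side, a value shift on the $v$-side) make $\Theta$ computable inductively, and each case is a short check. If you want to salvage your approach, you would need either to establish the necessity direction of the chain conjecture first, or to translate the binary-tableau data into a chain in a way that carries enough structure to handle your case~(iii); the paper's inductive tableau argument is the simpler route.
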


In particular, $\FS(x)\le \FS(w)$ (in fact, they are equal). This lemma may be a surprising result, because whenever $a_{x,w}\ne 0$, $w\le x$ \cite[Proposition~5.8]{Lenart-noncommutative}.

\begin{proof}[Proof of Theorem~\ref{thm:double-grothendieck-stability}(c), forward stability part]
By \eqref{eq:trivial-stability-inequalities}, $\FS_K(u,v)\ge \FS(u,v)$. The other direction is proved by considering the Schubert expansion of the Grothendieck product:
\[
\Groth_u(\bs{x})\Groth_v(\bs{x}) = \sum_{x,y\in S_{\Z_+}} a_{x,u}a_{y,v} \Sch_x(\bs{x}) \Sch_y(\bs{x}) = \sum_{x,y\in S_{\Z_+}} a_{x,u}a_{y,v} \sum_{w\in S_{\Z_+}} c_{x,y}^w \Sch_w(\bs{x}),
\]
so expanding the left side and matching coefficients, we have
\[
\sum_{x\in S_{\Z_+}} C_{u,v}^x a_{w,x} = \sum_{x,y\in S_{\Z_+}} a_{x,u}a_{y,v} c_{x,y}^w, \qquad\qquad \text{for all } u,v,w\in S_{\Z_+}.
\]
For fixed $u,v,w$, every term on both sides has sign $(-1)^{\ell(w)-\ell(u)-\ell(v)}$, so there is no cancellation. Let $n = \max_w \FS(w)$ over all $w$ such that the above sum is nonzero. On one hand,
\[
n = \max\{\FS(w) \mid \exists\; x \text{ such that } C_{u,v}^x a_{w,x} \ne 0\} \ge \max\{\FS(w) \mid C_{u,v}^w \ne 0\} = \FS(u,v).
\]

On the other hand, by Lemma~\ref{lem:grothendieck-expansion-dual-lehmer}, if $a_{x,u}\ne 0$, then $\Theta_i(x)\le \Theta_i(u)$, for all $i$, so $\Lambda_i(x)\le \Lambda_i(u)$ for all $i$, and similarly, $\Lambda_i(y)\le \Lambda_i(v)$ for all $i$. Applying Theorem~\ref{thm:double-grothendieck-stability}(a), we have $\FS(x,y)\le \FS(u,v)$ whenever $a_{x,u},a_{y,v}\ne 0$. Therefore,
\[
n = \max_{x\mid a_{x,u}\ne 0} \max_{y\mid a_{y,v}\ne 0} \FS(x,y)
\le \FS(u,v).\qedhere
\]
\end{proof}

The proof of Lemma~\ref{lem:grothendieck-expansion-dual-lehmer} will use a formula by Lenart \cite{Lenart-noncommutative} for the $a_{x,w}$. Fix $n$. A \emph{binary tableau} (of staircase shape) is an array
\[
T = (t_{ij})_{1\le i\le j < n}, \qquad t_{ij}\in \{0,1\} \text{ for all } i,j.
\]
Let $u_1,\ldots,u_{n-1},v_1,\ldots,v_{n-1}$ be two sets of generators, which satisfy the relations
\[
v_i^2 = 0, \qquad v_iv_{i+1}v_i = v_{i+1}v_iv_{i+1}, \qquad v_iv_j = v_jv_i \text{ if } |i-j|>1,
\]
\[
u_i^2 = u_i, \qquad u_iu_{i+1}u_i = u_{i+1}u_iu_{i+1}, \qquad u_iu_j = u_ju_i \text{ if } |i-j|>1.
\]
(In the case of $u$, these are the relations of the $0$-Hecke product \eqref{eq:hecke-relations}.)
For a binary tableau $T$, define
\[
u(T) = \prod_{j=n-1}^1 \prod_{i=1}^j u_{n-i}^{1-t_{ij}}, \qquad\qquad v(T) = \prod_{i=1}^{n-1} \prod_{j=n-1}^i v_j^{t_{ij}}.
\]

We associate $u(T)$ with a unique permutation in $S_n$ (which we also call $u(T)$) by choosing a minimal-length representative and then sending $u_i\mapsto s_i$. The analogous identification holds for $v(T)$ whenever it is nonzero.

\begin{theorem}[{\!\!\cite[Theorem~5.3]{Lenart-noncommutative}}]
The coefficient $a_{x,w}$ is nonzero if and only if there exists a binary tableau $T$ with $u(T) = w, v(T) = xw_0^{(n)}$.
\footnote{In a preprint version of this work we conjectured that $a_{x,w}$ is nonzero if and only if there exists a saturated chain in the Bruhat order from $w$ to $x$ such that every covering relation $u\leq v$ in the chain is of the form $v = u(a,b)$ such that there exists some $a<c<b$ where $u(c)>u(b)$. Anna Weigandt has informed the authors she has found counterexamples to sufficiency, the smallest of which is $w=1423, x=3412$. On the other hand, necessity is still open, and a careful code check shows that this direction holds through at least $n=7$.}
\end{theorem}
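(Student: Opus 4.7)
The plan is to prove Lenart's theorem by establishing the stronger formula $a_{x,w}=(-1)^{\ell(x)-\ell(w)}\cdot\#\{T:u(T)=w,\,v(T)=xw_0^{(n)}\}$ via a noncommutative generating-series argument in the spirit of Fomin-Kirillov, encoding Grothendieck and Schubert polynomials as coefficients of master elements in the $0$-Hecke and nilCoxeter algebras respectively, then producing a factorization identity that expands one in the basis of the other. Since all tableaux $T$ with $u(T)=w$ and $v(T)=xw_0^{(n)}$ contribute the same sign, nonvanishing of $a_{x,w}$ is then equivalent to the existence of such a tableau.

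First I would construct noncommutative generating series $\mathfrak{G}(\bs{x})\in\mathcal{H}_n\otimes\Q[\bs{x}]$ and $\mathfrak{S}(\bs{x})\in\mathcal{N}_n\otimes\Q[\bs{x}]$ as ordered products of linear factors indexed by the cells of the staircase $\{(i,j):1\le i\le j<n\}$, in the specific orders dictated by the definitions of $u(T)$ and $v(T)$ respectively. By matching the expansions of these products directly against the Hecke- and reduced-word sums in Definition~\ref{def:double-groth} (with $\bs{y}=0$), one verifies that the coefficient of the basis element $u_w$ in $\mathfrak{G}(\bs{x})$ equals $\Groth_w(\bs{x})$, and the coefficient of $v_x$ in $\mathfrak{S}(\bs{x})$ equals $\Sch_x(\bs{x})$.

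Next, working in a larger algebra containing commuting copies of both $\mathcal{H}_n$ and $\mathcal{N}_n$, I would split each cellwise factor of $\mathfrak{G}$ into a ``$u$-part'' and a ``$v$-part''. Expanding the full product then produces a sum indexed by binary choices at each of the $\binom{n}{2}$ cells---exactly the data of a binary tableau $T$. After bringing each resulting term to normal form using the $0$-Hecke and nilCoxeter relations, the $t_{ij}=0$ cells reassemble into $u(T)$ and the $t_{ij}=1$ cells reassemble into $v(T)$ together with its associated $\bs{x}$-monomial, so that extracting the coefficient of $u_w$ and matching monomials against the Schubert expansion of $\mathfrak{S}(\bs{x})$ yields the claimed formula for $a_{x,w}$.

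The main obstacle will be handling the conflicting orderings: the products defining $u(T)$ and $v(T)$ read the cells in different directions (columns right-to-left for $u$ versus rows top-to-bottom for $v$), so after splitting each factor one must commute the $u$- and $v$-contributions past each other while respecting the braid and commutation relations, without introducing spurious terms. Verifying that the surviving monomials correspond exactly to products $u(T)v(T)$ with the correct sign $(-1)^{\ell(x)-\ell(w)}$ requires careful tracking of the reordering, most naturally by induction on the number of cells processed, and this analysis is the heart of the argument.
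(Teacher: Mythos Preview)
The paper does not prove this theorem; it is quoted verbatim from \cite[Theorem~5.3]{Lenart-noncommutative} and used as a black box in the proof of Lemma~\ref{lem:grothendieck-expansion-dual-lehmer}. There is therefore no proof in the paper to compare your attempt against.

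For what it is worth, your sketch is a reasonable outline of the Fomin--Kirillov/Lenart noncommutative-Cauchy approach that underlies the original result: encode $\Groth_w$ and $\Sch_x$ as coefficients in master products over the staircase in the $0$-Hecke and nilCoxeter algebras, then expand a mixed product over binary choices at each cell. You have correctly identified the crux, namely that the $u(T)$ product reads cells column-by-column (right to left) while the $v(T)$ product reads them row-by-row (top to bottom), so one cannot simply split each cellwise factor in place and collect; one needs a commutation or Yang--Baxter-type identity relating the two orderings before the binary-tableau expansion drops out. As written, your second paragraph glosses over exactly this point (``after bringing each resulting term to normal form'' hides the entire argument), and your third paragraph acknowledges as much. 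So the proposal names the right strategy and the right obstacle but does not yet supply the key step.
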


This result is the main ingredient in the proof of Lemma~\ref{lem:grothendieck-expansion-dual-lehmer}.

\begin{proof}[Proof of Lemma~\ref{lem:grothendieck-expansion-dual-lehmer}]
We prove that for all binary tableaux $T$ and all $i$, 
\begin{equation} \label{eq:theta-u-v-relationship}
    \Theta_i(v(T)w_0^{(n)}) \le \Theta_i(u(T)).
\end{equation}
We will use induction on the tableau size $n$, in a similar manner to \cite[Proposition~5.8]{Lenart-noncommutative}. For $n=1$, \eqref{eq:theta-u-v-relationship} holds via a simple check, so assume $n>1$, and that \eqref{eq:theta-u-v-relationship} holds for binary tableaux of size $n-1$.

Let $T' = (t_{ij})_{1\le i < j\le n-1}$ be the binary tableau formed from $T$ by deleting all entries of the form $t_{ii}$. For $T'$, we keep the row and column indices the same as for $T$; the row indices will be $1,\ldots,n-2$ and the column indices will be $2,\ldots,n-1$. Let $\ba{T}$ be the same binary tableau at $T'$, but where the column indices are shifted to be $1,\ldots,n-2$. Let $u=u(T), v=v(T)w_0^{(n)}, u'=u(T'), v'=v(T')w_0^{(n)}, \ba{u}=u(\ba{T}), \ba{v}=v(\ba{T})w_0^{(n-1)}$. We have
\[
u'(j) = \begin{cases} 1, & \text{if } j=1, \\ \ba{u}(j-1)+1, & \text{otherwise},\end{cases}
\qquad\qquad
v'(j) = \begin{cases} 1, & \text{if } j=n, \\ \ba{v}(j)+1, & \text{otherwise},\end{cases}
\]
and
\[
\Theta_j(u') = \begin{cases} 0, & \text{if } j=1, \\ \Theta_{j-1}(\ba{u}), & \text{otherwise},\end{cases}
\qquad\qquad
\Theta_j(v') = \begin{cases} 1, & \text{if } j=n, \\ \Theta_j(\ba{v}), & \text{otherwise}.\end{cases}
\]
See Example~\ref{ex:binary-tableaux} for an example of these computations.

Now, $\ba{T}$ is a binary tableau of size $n-1$, so by the inductive hypothesis, $\Theta_i(\ba{v}) \le \Theta_i(\ba{u})$ for all $1\le i\le n-1$. By \cite[(5.9)]{Lenart-noncommutative}, there exist subsets
\[
I = \{i_1<i_2<\cdots < i_k\} \subseteq \{1,\cdots,n-1\},
\]
\[
J = \{j_1<j_2<\cdots < j_{n-k-1}\} = \{1,\cdots,n-1\} \setminus I
\]
such that
$u = u'u_{i_1}\cdots u_{i_k}, v = v' v_{j_{n-k-1}}\cdots v_1$ and $\ell(u) = \ell(u') + k$, $\ell(v) = \ell(v') - (n-k-1)$. (A convenient way to think about these expressions is that $u_{i_1}\cdots u_{i_k}$ acts on the right of the one-line notation of $u'$ by dragging entries to the right in a way that increases the length of the permutation, while $v_{j_1}\cdots v_{j_{n-k-1}}$ acts on the right of the one-line notation of $v'$ by dragging entries to the left in a way that decreases the length of the permutation.)

Next, we show that $\Theta_a(v) \le \Theta_a(u)$ for all $a$. Since $\Theta_1(v) = 0 = \Theta_1(u)$, this holds whenever $a=1$. If $n-1\in I$, then $u(n-1)>u(n)$, so $\Theta_n(u)=1\ge \Theta_n(v)$. If $n-1\in J$, then $v'(n-1)>v'(n)$, so $\Theta_n(v')=0$, a contradiction, so $\Theta_a(v) \le \Theta_a(u)$ holds for $a=n$.

Now, suppose that $1<a<n$.

\emph{Case 1}. If $a-1\notin I,a\notin I$, then $\Theta_a(u) = \Theta_a(u') = \Theta_{a-1}(\ba{u})$. In addition, $a-1\in J,a\in J$, so $\Theta_a(v) = \Theta_{a-1}(v') = \Theta_{a-1}(\ba{v})$, where the first equality holds because $v(a) = v'(a-1)$ and the unique $b$ such that $v^{-1}(b) < a, (v')^{-1}(b) > a-1$ satisfies $b<v(a)$. Thus, we have $\Theta_a(v) = \Theta_{a-1}(\ba{v}) \le \Theta_{a-1}(\ba{u}) = \Theta_a(u)$.

\emph{Case 2}. If $a-1\in I, a\notin I$, then $u(a)<u(a-1)$, so $\Theta_a(u) = 1$, and we have $\Theta_a(v) \le 1 = \Theta_a(u)$.

\emph{Case 3}. If $a-1\notin I, a\in I$, then $\Theta_a(u) = \Theta_a(u')$ if $u(a)>u(a+1)$, and $\Theta_{a+1}(u')$ otherwise, so $\Theta_a(u) = \min(\Theta_a(u'), \Theta_{a+1}(u')) = \min(\Theta_{a-1}(\ba{u}), \Theta_a(\ba{u}))$. In addition, $a-1\in J, a\notin J$, so in particular $v'(a-1)>v'(a)$, so $\Theta_a(\ba{v}) = \Theta_a(v') = 1$. Since $\Theta_a(\ba{v})\le \Theta_a(\ba{u})$, we also have $\Theta_a(\ba{u}) = 1$, so $\Theta_a(u) = \Theta_{a-1}(\ba{u})$. Thus, we have
\[
\Theta_a(v) = \Theta_{a-1}(v') = \Theta_{a-1}(\ba{v})\le \Theta_{a-1}(\ba{u}) = \Theta_a(u),
\]
where the first equality is because $v(a) = v'(a-1)$ and any $b>v(a)$ satisfies $v^{-1}(b) < a \iff (v')^{-1}(b) < a-1$.

\emph{Case 4}. If $a-1\in I,a\in I$, then $\Theta_a(u) = \Theta_{a+1}(u') = \Theta_a(\ba{u})$, where the first equality follows from a simple case check, depending on the relative ordering of $u'(a-1),u'(a)$, and the (unique if it exists) $b$ such that $u^{-1}(b) > a, (u')^{-1}(b)<a-1$. In addition, $a-1\notin J,a\notin J$, so $\Theta_a(v) = \Theta_a(v') = \Theta_a(\ba{v})$. Thus, $\Theta_a(v) = \Theta_a(\ba{v})\le \Theta_a(\ba{u}) = \Theta_a(u)$.\qedhere
\end{proof}

\begin{example} \label{ex:binary-tableaux}
Let $T$ be the following binary tableau:
\[
T =\; \begin{matrix} t_{14}&t_{13}&t_{12}&t_{11} \\ t_{24}&t_{23}&t_{22} \\ t_{34}&t_{33} \\ t_{44}\end{matrix} \;=\; \begin{matrix} 0&1&0&1 \\ 1&1&0 \\ 0&0 \\ 0\end{matrix}.
\]
Then,
\[
T' =\; \begin{matrix} t_{14}&t_{13}&t_{12} \\ t_{24}&t_{23} \\ t_{34}\end{matrix} \;=\; \begin{matrix} 0&1&0 \\ 1&1 \\ 0\end{matrix}, \qquad \text{and} \qquad \ba{T} \; =\; \begin{matrix} \ba{t_{13}}&\ba{t_{12}}&\ba{t_{11}} \\ \ba{t_{23}}&\ba{t_{22}} \\ \ba{t_{33}}\end{matrix} \;=\; \begin{matrix} 0&1&0 \\ 1&1 \\ 0\end{matrix}.
\]
We have
\[
\ba{u} = u(\ba{T}) = u_3u_1u_3 = s_3s_1 = 2\ul{1}4\ul{3}, \qquad v(\ba{T}) = v_2v_3v_2 = s_2s_3s_2 = 1432,
\]
\[
u' = u(T') = u_4u_2u_4 = s_4s_2 = 13\ul{2}5\ul{4}, \qquad v(T') = v_3v_4v_3 = s_3s_4s_3 = 12543,
\]
\[
u = u(T) = u_4u_2u_1u_2u_4u_3 = s_4s_2s_1s_2s_3 = 3\ul{2}5\ul{1}\ul{4}, \quad v(T) = v_3v_1v_4v_3 = s_3s_1s_4s_3 = 21543,
\]
\[
\ba{v} = v(\ba{T})w_0^{(n-1)} = 234\ul{1}, \qquad v' = v(T')w_0^{(n)} = 345\ul{2}\ul{1}, \qquad v = v(T)w_0^{(n)} = 345\ul{1}\ul{2},
\]
where we have underlined the entries of the relevant permutations where $\Theta=1$.
Comparing $u'$ with $u$ and $v'$ with $v$, we see that $I = \{1,2,3\}, J = \{4\}$. The lemma is confirmed in this case: for every underlined entry of $v$, the corresponding entry of $u$ is also underlined.
\end{example}

\begin{proof}[Proof of Theorem~\ref{thm:double-grothendieck-stability}(c), back-stability part]
The Grothendieck structure constants satisfy the symmetry (see e.g. \cite{KnutsonYong})
\begin{equation} \label{eq:w0-conjugation}
C_{u,v}^w = C_{w_0uw_0,w_0vw_0}^{w_0ww_0} \qquad \text{for any } u,v,w\in S_n \text{ and } w_0:=w_0^{(n)}.
\end{equation}
We have $w_0ww_0(n+1-i) = n+1-w(i)$, and from this it follows that
\begin{equation} \label{eq:code-dualcode-relation}
\BS(w) = \FS(w_0ww_0)-n \qquad \text{and} \qquad
\code(w)_i = \dualcode(w_0ww_0)_{n+1-i}.
\end{equation}
Therefore,
\begin{align*}
\Omega(u,v) &= \max_{i \geq -\max(\BS(u),\BS(v)}(\lambda_i(u) + \lambda_i(v) - i)
\\&= \max_{i \geq n-\max(\FS(w_0uw_0),\FS(w_0vw_0))}(\Lambda_{n+1-i}(w_0uw_0) + \Lambda_{n+1-i}(w_0vw_0) - i)
\\&= \max_{i \leq 1+\max(\FS(w_0uw_0),\FS(w_0vw_0))}(\Lambda_i(w_0uw_0) + \Lambda_i(w_0vw_0) - n-1+i)
\\&= \Xi(w_0uw_0,w_0vw_0) - n,
\end{align*}
so by the forward-stability part of Theorem~\ref{thm:double-grothendieck-stability}(c),
\[
\FS_K(w_0uw_0,w_0vw_0) = \Xi(w_0uw_0,w_0vw_0) = \Omega(u,v)+n.
\]
Also note that $\BS(\gamma^k(w)) = \BS(w)-k$ and $\Omega(\gamma^k(u), \gamma^k(v)) = \Omega(u,v)-k$.

By \eqref{eq:trivial-stability-inequalities} and Theorem~\ref{thm:double-grothendieck-stability}(a), $\Omega(u,v) = \BS(u,v) \le \BS_K(u,v)$. To prove the other inequality, take any $w\in S_\Z$ such that $\ol{C_{u,v}^w}\ne 0$. Let $k\ge \BS(w)$, and let $u' = \gamma^k(u), v' = \gamma^k(v), w' = \gamma^k(w)$. By \eqref{eq:bs-struct-cnst-double-groth} and \eqref{eq:w0-conjugation},
\[
\ol{C_{u,v}^w} = C_{u',v'}^{w'} = C_{w_0u'w_0,w_0v'w_0}^{w_0w'w_0},
\]
where $w_0:=w_0^{(n)}$ for $n$ taken large enough so that $u',v',w'\in S_n$.
Therefore, 
\begin{align*}
\BS(w) + n - k &= \BS(w') + n \\
&= \FS(w_0w'w_0) \\ 
&\le \FS_K(w_0u'w_0,w_0v'w_0) \\
&= \Xi(w_0u'w_0,w_0v'w_0) \\
&= \Omega(u',v') + n \\
&= \Omega(u,v) + n - k,
\end{align*}

so $\BS_K(u,v) = \max(\{\BS(w) \mid \ol{C_{u,v}^w} \neq 0\}) \leq \Omega(u,v)$.
\qedhere
\end{proof}

\subsection{Double Grothendieck stability}

To prove part (d) of Theorem~\ref{thm:double-grothendieck-stability}, we will use a similar method to the previous subsection: expand each double Grothendieck polynomial as a sum of single Grothendieck polynomials, take the product, and re-expand in terms of double-Grothendieck polynomials. The key formula here is the Cauchy identity first proven by Fomin and Kirillov \cite{FominKirillov-grothendieck}, which can be written as an expansion of double Grothendieck polynomials into single Grothendieck polynomials, as in the proof of \cite[Theorem~6.7]{LenartRobinsonSottile}. Lam, Lee, and Shimozono \cite{LamLeeShimiozono-grothendieck} give the inverse expansion, and for consistency we will use their version of both formulas. By applying these expansions, we obtain a formula \eqref{eq:C-uv-w-y-expansion} for $C_{u,v}^w(\bs{y})$ as a $\Q(\bs{y})$-linear combination of the $C_{u,v}^w$. This formula has cancellation, but it is sufficient for our purposes.

Given $u,w$, there exists $v$ such that $v\ast u = w$ if and only if $u\le_L w$. Let $\ominus y = \frac{-y}{1-y}$, and let $\ominus\bs{y} = (\ldots, \ominus y_0, \ominus y_1,\ominus y_2,\ldots)$. 

\begin{proposition}[{\!\!\cite[Proposition~5.8]{LamLeeShimiozono-grothendieck}}] \label{prop:double-grothendieck-expansion} 
For all $w\in S_{\Z_+}$,
\[
\Groth_w(\bs{x},\bs{y}) = \sum_{u\le_L w}A_{w,u}(\bs{y})\Groth_u(\bs{x}),
\quad\text{where}\quad
A_{w,u}(\bs{y}) = \sum_{v\ast u = w} (-1)^{\ell(w)-\ell(u)-\ell(v)} \Groth_{v^{-1}}(\ominus\bs{y}),
\]
\[
\Groth_w(\bs{x}) = \sum_{u\le_L w}B_{w,u}(\bs{y})\Groth_u(\bs{x},\bs{y}),
\quad\text{where}\quad
B_{w,u}(\bs{y}) = \sum_{v\ast u = w} (-1)^{\ell(w)-\ell(u)-\ell(v)} \Groth_{v}(\bs{y}).
\]
\end{proposition}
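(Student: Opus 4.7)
The plan is to derive both expansion formulas from a single Cauchy-type master identity
\begin{equation} \label{eq:cauchy-master-proposal}
\Groth_w(\bs{x}, \bs{y}) = \sum_{u \ast v = w} (-1)^{\ell(u)+\ell(v)-\ell(w)}\, \Groth_u(\bs{x})\, \Groth_{v^{-1}}(\ominus \bs{y}),
\end{equation}
which is essentially the Fomin--Kirillov Cauchy identity recast via the Demazure product. Once \eqref{eq:cauchy-master-proposal} is established, the first formula follows immediately by grouping terms on the right by the value of $u$: the coefficient of $\Groth_u(\bs{x})$ is exactly $A_{w,u}(\bs{y})$, and the indexing set $\{u \mid u \le_L w\}$ is the image of the projection $(u,v) \mapsto u$, since $v \ast u = w$ admits a solution $v$ precisely when $u \le_L w$.

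The bulk of the work, and the main obstacle, is proving \eqref{eq:cauchy-master-proposal}. I would argue by induction on $\ell(w)$ via the isobaric divided difference operators $\pi_i$ acting on the $\bs{x}$-variables. The base case $w = e$ is trivial. For the inductive step, choose $s_i$ with $s_i \ast w' = w$ and $\ell(w') < \ell(w)$ and apply $\pi_i$ to the inductive identity for $w'$. Using the standard recursions $\pi_i \Groth_{w'}(\bs{x}) = \Groth_{s_i \ast w'}(\bs{x})$ and $\pi_i \Groth_{w'}(\bs{x}, \bs{y}) = \Groth_{s_i \ast w'}(\bs{x}, \bs{y})$, together with $\bs{y}$-linearity of $\pi_i$, reduces the verification to a combinatorial identity on the index set $\{(u,v) \mid u \ast v = w'\}$: one must analyze how left-multiplication by $s_i$ in the Demazure product rearranges this sum, with the signs $(-1)^{\ell(u)+\ell(v)-\ell(w)}$ tracking length changes. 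A careful case analysis based on whether $s_i u > u$ or not should close the argument.

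The second formula is then obtained by inverting the first. The matrix $[A_{w,u}(\bs{y})]$ is upper unitriangular with respect to $\le_L$: it vanishes unless $u \le_L w$, and the diagonal entry satisfies $A_{w,w}(\bs{y}) = \Groth_e(\ominus \bs{y}) = 1$ (only $v = e$ contributes to $v \ast w = w$). To verify that the inverse has the claimed closed form, it suffices to check
\[
\sum_{u \le_L v \le_L w} B_{w,v}(\bs{y})\, A_{v,u}(\bs{y}) = \delta_{u,w},
\]
which, after unfolding and reparametrizing via $(v_1, v_2) \mapsto v_3 = v_1 \ast v_2$, reduces to the antipode-type identity
\[
\sum_{v_1 \ast v_2 = v_3} (-1)^{\ell(v_1)+\ell(v_2)-\ell(v_3)} \Groth_{v_1}(\bs{y})\, \Groth_{v_2^{-1}}(\ominus \bs{y}) = \delta_{v_3, e}.
\]
This antipode identity is precisely \eqref{eq:cauchy-master-proposal} specialized under the substitution $\bs{x} = \bs{y}$, combined with the well-known vanishing $\Groth_w(\bs{y}, \bs{y}) = \delta_{w,e}$, so it comes essentially for free once the master identity is in hand.
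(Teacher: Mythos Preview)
The paper does not prove this proposition; it is quoted from \cite[Proposition~5.8]{LamLeeShimiozono-grothendieck}, with the first expansion attributed to the Fomin--Kirillov Cauchy identity and the second to its inversion. Your overall strategy---establish the Cauchy-type master identity, group terms to read off $A_{w,u}(\bs{y})$, then invert using the vanishing $\Groth_w(\bs{y},\bs{y})=\delta_{w,e}$---is precisely the route taken in those references, so there is nothing to compare at the level of approach.

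There are, however, two concrete problems in the execution. First, the factor ordering in your master identity is inconsistent with the target. You write the sum over $u\ast v=w$, but grouping that by $u$ yields $\sum_{v:\,u\ast v=w}(-1)^{\cdots}\Groth_{v^{-1}}(\ominus\bs{y})$, whereas $A_{w,u}(\bs{y})$ is defined as a sum over $v$ with $v\ast u=w$. Since $\ast$ is not commutative, these differ; indeed $u\ast v=w$ has a solution $v$ iff $u\le_R w$, not $u\le_L w$. You even state the correct left-order criterion in the next clause, so the master identity as written is internally inconsistent and must be corrected to $\sum_{v\ast u=w}(-1)^{\ell(w)-\ell(u)-\ell(v)}\Groth_u(\bs{x})\Groth_{v^{-1}}(\ominus\bs{y})$. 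The same left/right swap contaminates the antipode step: after unfolding $\sum_v B_{w,v}A_{v,u}$ you obtain a sum over $(v_1,v_2)$ with $v_1\ast v_2\ast u=w$, and the inner bracket is indexed by $v_1\ast v_2=v_3$; but specializing the \emph{correct} master identity at $\bs{x}=\bs{y}$ gives a vanishing indexed by $v_2\ast v_1=v_3$, so an extra (easy, but not stated) relabeling or an appeal to $(\cdot)^{-1}$ is needed.

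Second, the claimed recursion $\pi_i\Groth_{w'}(\bs{x})=\Groth_{s_i\ast w'}(\bs{x})$ is false for the isobaric divided difference in the $\bs{x}$-variables. That operator acts on the \emph{right} of the indexing permutation and \emph{decreases} length: $\pi_i\Groth_w=\Groth_{ws_i}$ if $ws_i<w$ and $\pi_i\Groth_w=\Groth_w$ otherwise. It therefore cannot drive an upward induction via the Demazure product on the left. The standard argument either inducts downward from $w_0^{(n)}$ (where the identity is a direct product computation), or uses the divided-difference operators in the $\bs{y}$-variables, which do act on the left of $w$. Either fix is routine, but the induction as you have sketched it does not go through.
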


Note that $A_{w,u}(\bs{y})$ and $B_{w,u}(\bs{y})$ are specializations of the \emph{biaxial Grothendieck polynomial} studied in \cite{BFHTW-frozen-pipes}. Since (single) Grothendieck polynomials are linearly independent, $A_{w,u}(\bs{y})$ and $B_{w,u}(\bs{y})$ are always nonzero when $u\le_L w$.

\begin{lemma} \label{lem:weak-bruhat-Lambda}
    If $u\le_L w$, then for all $j$, $\Theta_j(u)\le\Theta_j(w)$ and  $\theta_j(u)\le\theta_j(w)$.
\end{lemma}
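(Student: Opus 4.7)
The plan is to induct on the length difference $\ell(w) - \ell(u)$ along a saturated chain in the weak left Bruhat order, reducing the claim to the atomic case $w = s_k u$ with $\ell(w) = \ell(u) + 1$. The base case $u = w$ is trivial.

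For the inductive step, suppose $w = s_k u$ with $\ell(w) = \ell(u) + 1$, and set $a = u^{-1}(k)$ and $b = u^{-1}(k+1)$; the length-increasing hypothesis forces $a < b$. Then $w$ is obtained from $u$ by swapping the values $k$ and $k+1$ in one-line notation: $w(a) = k+1$, $w(b) = k$, and $w(j) = u(j)$ for all other $j$. I claim that the inversion set of $w$ equals the inversion set of $u$ together with the single new pair $(a,b)$. The pair $(a,b)$ is plainly an inversion of $w$ but not of $u$, and for any other pair $(i,j)$ with $i < j$ a short case check, using $u^{-1}(k) = a$ and $u^{-1}(k+1) = b$ to rule out any other occurrence of the values $k$ or $k+1$, shows that the inversion status of $(i,j)$ is preserved.

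The conclusion follows immediately from the definitions: $\theta_j(x) = 1$ exactly when $j$ is the first coordinate of some inversion of $x$ (equivalently $\code(x)_j > 0$), and $\Theta_j(x) = 1$ exactly when $j$ is the second coordinate (equivalently $\dualcode(x)_j > 0$). Adding the single inversion $(a,b)$ when passing from $u$ to $w$ can only enlarge the set of first coordinates (at most by $a$) and the set of second coordinates (at most by $b$), which yields $\theta_j(u) \le \theta_j(w)$ and $\Theta_j(u) \le \Theta_j(w)$ for all $j$, completing the induction.

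The only real content is the inversion-set comparison in the inductive step, and the mild subtlety there is exactly the observation that swapping values $k, k+1$ at positions $a, b$ cannot affect any other pair, since the values $k$ and $k+1$ appear nowhere else in the one-line notation.
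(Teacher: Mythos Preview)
Your proof is correct and takes essentially the same approach as the paper for the $\Theta$ part: both reduce by induction to a covering relation $w = s_k u$ and analyze how the inversion structure changes, with the paper computing the change in $\dualcode$ directly and you phrasing the identical computation in terms of the inversion set gaining exactly the pair $(a,b)$.

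The one difference worth noting is in the treatment of $\theta$. You handle $\theta$ and $\Theta$ symmetrically in a single stroke, since the inversion-set argument simultaneously controls first and second coordinates of inversions. The paper instead proves only the $\Theta$ inequality by the direct computation, and then deduces the $\theta$ inequality from it via the conjugation identity $\code(w)_i = \dualcode(w_0 w w_0)_{n+1-i}$ together with the fact that $u \le_L w$ implies $w_0 u w_0 \le_L w_0 w w_0$. Your route is a bit more self-contained and avoids invoking that extra symmetry; the paper's route has the advantage of reusing a relation it needs elsewhere anyway. Either way the argument is short and the content is the same.
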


\begin{proof}
We first show that $\Theta_j(u)\le\Theta_j(w)$ for all $j$, and use induction on the left Bruhat order. Suppose that $w$ covers $u$, i.e. $w = s_iu$ for some $i$ and $\ell(w) = \ell(v)+1$. Let $a = u^{-1}(i), b = u^{-1}(i+1)$. We prove that $\dualcode_b(w) = \dualcode_b(u)+1$, and that for all other $j$, $\dualcode_j(w) = \dualcode_j(u)$. This shows that $\dualcode_j(u) \le \dualcode_j(w)$ for all $j$, meaning that $\Theta_j(u)\le\Theta_j(w)$.

We have $w(a)=i+1, w(b)=i$, and for all other $j$, $w(j)=u(j)$. Since $u\le w$, we have $a<b$. For $j\ne b$,
we have $\dualcode_j(w) = \dualcode_j(u)$ since $w(j) < i\iff w(j) < i+1$. On the other hand, $\dualcode_b(w) = \dualcode_b(u)+1$ since $w(b) = i < i+1 = w(a)$ while $u(b) = i+1 > i = u(a)$, and for all $j<b$ with $j\ne a$, $w(b) < w(j) \iff u(b) < u(j)$.

Now we show that $\theta_j(v)\le\theta_j(w)$ for all $j$. Choose $n$ large enough so that $v,w\in S_n$. By
\eqref{eq:code-dualcode-relation}, $\theta_j(u) = \Theta_{n+1-j}(w_0uw_0)$, and similarly for $w$. The Bruhat inequality $u\le_L w$ implies that $w_0uw_0\le_L w_0ww_0$, so by the first part of this proof,
\[
\theta_j(u) = \Theta_{n+1-j}(w_0uw_0) \le \Theta_{n+1-j}(w_0ww_0) = \theta_j(w).\qedhere
\]
\end{proof}

\begin{proof}[Proof of Theorem~\ref{thm:double-grothendieck-stability}(d)]
By \eqref{eq:trivial-stability-inequalities}, $\FS_K^D(u,v)\ge \FS_K(u,v)$ and $\BS_K^D(u,v)\ge \BS_K(u,v)$.

The other direction is proved by considering the Grothendieck expansion of the double Grothendieck product. By Proposition~\ref{prop:double-grothendieck-expansion},
\begin{align*}
\Groth_u(\bs{x},\bs{y})\Groth_v(\bs{x},\bs{y})
&= \sum_{\substack{a\le_L u\\ b\le_L v}} \Groth_a(\bs{x}) A_{u,a}(\bs{y}) \Groth_b(\bs{x}) A_{v,b}(\bs{y})
\\&= \sum_z \Groth_z(\bs{x}) \left(\sum_{\substack{a\le_L u\\ b\le_L v}} C_{a,b}^z A_{u,a}(\bs{y}) A_{v,b}(\bs{y})\right).
\\&= \sum_{w} \sum_{z\ge_L w} B_{z,w}(\bs{y}) \left(\sum_{\substack{a\le_L u\\ b\le_L v}} C_{a,b}^z A_{u,a}(\bs{y}) A_{v,b}(\bs{y})\right)\Groth_w(\bs{x},\bs{y}),
\end{align*}
so picking off coefficients and expanding the right side,
\begin{equation} \label{eq:C-uv-w-y-expansion}
C_{u,v}^w(\bs{y})
= \sum_{\substack{a\le_L u\\ b\le_L v \\ z\ge_L w}} \sum_{\substack{c\ast a = u\\ d\ast b = v \\ e\ast w = z}} (-1)^{\ell(u)-\ell(a)-\ell(c)+\ell(v)-\ell(d)-\ell(b)+\ell(w)-\ell(e)-\ell(z)} C_{a,b}^z \Groth_{c^{-1}}(\ominus\bs{y}) \Groth_{d^{-1}}(\ominus\bs{y}) \Groth_e(\bs{y}).
\end{equation}

In particular, $C_{u,v}^w(\bs{y}) = 0$ unless there exist $a,b,z$ such that $a\le_L u, b\le_L v, z\ge_L w$ and $C_{a,b}^z\ne 0$. By Lemma~\ref{lem:weak-bruhat-Lambda}, $\lambda_i(a)\le \lambda_i(u), \lambda_i(b)\le \lambda_i(u)$, $\Lambda_i(a)\le \Lambda_i(u), \Lambda_i(b)\le \Lambda_i(u)$, for all $i$, so $\Omega(a,b)\le \Omega(u,v)$ and $\Xi(a,b)\le \Xi(u,v)$. Since $w\le z$, $\BS(w)\le\BS(z)$ and $\FS(w)\le\FS(z)$. Thus,
\[
\BS(w) \le \BS(z) \le \Omega(a,b) \le \Omega(u,v), \qquad \text{and} \qquad \FS(w) \le \FS(z) \le \Xi(a,b) \le \Xi(u,v),
\]
as desired.\qedhere
\end{proof}

\begin{remark}
The formula \eqref{eq:C-uv-w-y-expansion} has multiple drawbacks. First, the positivity property \cite{AGM-positivity} of the $C_{u,v}^w(\bs{y})$ is not clear from \eqref{eq:C-uv-w-y-expansion}, even when the (alternating) positivity of $C_{u,v}^w$ is taken for granted. Second, \eqref{eq:C-uv-w-y-expansion} gives an expansion of $C_{u,v}^w(\bs{y})$ in terms of products of Grothendieck polynomials in $\bs{y}$ and $\ominus\bs{y}$, which is not a basis of $\C(\bs{y})$. The first issue appears very difficult to resolve. The second issue, however, is fixed in the analogous formula for double Schubert polynomials:
\[
c_{u,v}^w(\bs{y})
= \sum_g\left(\sum_{\substack{c,d,z \\ c\le_R u \\ d\le_R v \\ z\ge_L w}} (-1)^{\ell(c) + \ell(d)}c_{c^{-1}u,d^{-1}v}^z c_{c^{-1},d^{-1},w^{-1}z}^g\right) \Sch_g(\bs{y}),
\]
where we take $c_{a,b,c}^z$ to mean $\sum_d c_{a,b}^d c_{c,d}^z$.
\end{remark}

\section{The integer support of the double Grothendieck product}

It follows from Theorem~\ref{thm:double-grothendieck-stability} that the largest simple reflection appearing on the right side of \eqref{eq:double-grothendieck-structure-constants} is $\Xi(u,v)-1$, and the smallest is $\max(1-\Omega(u,v),1)$. As we will see in this section, that is enough to characterize which simple reflections appear.

Given $w\in S_\Z$, let $\supp w$ be the set $\{i_1,\ldots,i_k\}$ for any reduced word $(i_1,\ldots,i_k)$ of $w$. This definition is independent of the reduced word chosen. For a set $P$ of permutations, let $\supp P = \bigcup_{w\in P} \supp w$. Let $\lambda_{i,j}(w) = \sum_{i\le k\le j} \theta_k(w)$ and $\Lambda_{i,j}(w) = \sum_{i\le k\le j} \Theta_k(w)$.

\begin{corollary} \label{cor:integer-support}
If $u,v\in S_\Z$ and $P$ is any of the following sets:
\begin{equation} \label{eq:bs-coeffs}
\begin{aligned}
&\{w\in S_\Z \mid \ol{c_{u,v}^w}\ne 0\},\qquad
&&\{w\in S_\Z \mid \ol{c_{u,v}^w}(\bs{y})\ne 0\},
\\&\{w\in S_\Z \mid \ol{C_{u,v}^w}\ne 0\},\qquad
&&\{w\in S_\Z \mid \ol{C_{u,v}^w}(\bs{y})\ne 0\},
\end{aligned}
\end{equation}
then we have
\begin{equation} \label{eq:bs-support}
\begin{aligned}
\supp P = \{j\in\Z \mid &\exists i\in\supp\{u,v\} \text{ s.t. } \\&\max(1, \lambda_{j,i}(u) + \lambda_{j,i}(v), \Lambda_{i,j}(u) + \Lambda_{i,j}(v)-1) > |j-i|\}.
\end{aligned}
\end{equation}

Furthermore, if $u,v\in S_{\Z_+}$ and $P$ is any of the following sets: 
\begin{equation} \label{eq:ord-coeffs}
\begin{aligned}
&\{w\in S_\Z \mid c_{u,v}^w\ne 0\},\qquad
&&\{w\in S_\Z \mid c_{u,v}^w(\bs{y})\ne 0\},
\\&\{w\in S_\Z \mid C_{u,v}^w\ne 0\},\qquad
&&\{w\in S_\Z \mid C_{u,v}^w(\bs{y})\ne 0\},
\end{aligned}
\end{equation}
then we have 
\begin{equation} \label{eq:non-bs-support}
\begin{aligned}
\supp P = \{j\ge 1 \mid &\exists i\in\supp\{u,v\} \text{ s.t. } \\&\max(1, \lambda_{j,i}(u) + \lambda_{j,i}(v), \Lambda_{i,j}(u) + \Lambda_{i,j}(v)-1) > |j-i|\}.
\end{aligned}
\end{equation}
\end{corollary}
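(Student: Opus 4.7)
The plan is to reduce the eight cases in \eqref{eq:bs-coeffs}--\eqref{eq:non-bs-support} to the single case of the ordinary Schubert product, then prove that case using Theorem~\ref{thm:double-grothendieck-stability}(a). The specialization implications $c_{u,v}^w\ne 0 \Rightarrow c_{u,v}^w(\bs{y}), C_{u,v}^w, C_{u,v}^w(\bs{y})\ne 0$ (with analogues for back-stable constants) nest the four support sets in \eqref{eq:ord-coeffs}, so it suffices to show $\supp\{w : C^w_{u,v}(\bs{y})\ne 0\}\subseteq \mathrm{RHS}$ and $\mathrm{RHS}\subseteq \supp\{w : c^w_{u,v}\ne 0\}$, where $\mathrm{RHS}$ is the right-hand side of \eqref{eq:non-bs-support}. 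The back-stable statement \eqref{eq:bs-coeffs}--\eqref{eq:bs-support} will follow from the non-back-stable via the $\gamma$-shift \eqref{eq:bs-struct-cnst-double-groth}: for sufficiently large $k$, $\ol{C^w_{u,v}}(\bs{y}) = C^{\gamma^k w}_{\gamma^k u,\gamma^k v}(\gamma^{-k}\bs{y})$, and the defining condition of \eqref{eq:bs-support} is invariant under simultaneous $\gamma$-translation of $u,v,i,j$.

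For the upper inclusion, I would combine the expansion \eqref{eq:C-uv-w-y-expansion} with the identity $\sum_x C_{u,v}^x a_{w,x} = \sum_{x,y} a_{x,u} a_{y,v} c_{x,y}^w$ from the proof of Theorem~\ref{thm:double-grothendieck-stability}(c). Non-vanishing of $C^w_{u,v}(\bs{y})$ produces, through two successive reductions, a Schubert structure constant $c^w_{x,y}\ne 0$ with $x\le u$, $y\le v$ in strong Bruhat order. Then $\supp\{x,y\}\subseteq \supp\{u,v\}$, and via Lemma~\ref{lem:grothendieck-expansion-dual-lehmer} (together with its dual $\theta$-analogue obtained by $w_0$-conjugation through \eqref{eq:code-dualcode-relation}), $\Theta_k(x)\le \Theta_k(u)$ and $\theta_k(x)\le \theta_k(u)$ (with analogues for $y,v$); these comparisons give $\mathrm{RHS}(x,y)\subseteq \mathrm{RHS}(u,v)$, reducing the upper inclusion to the Schubert case.

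For the Schubert case, the lower inclusion $\mathrm{RHS}\subseteq \supp\{w : c^w_{u,v}\ne 0\}$ is by explicit construction. Since $\FS(w)-1\in \supp w$ whenever $\FS(w)$ is defined (as $w$ cannot fix all of $\{\FS(w),\FS(w)+1,\ldots\}$ while still moving $\FS(w)$), the extremal permutations of \cite[Theorem~1.6, Proposition~6.3]{HardtWallach} realising $\FS(u,v)=\Xi$ and $\BS(u,v)=\Omega$ give the endpoints $j=\Xi-1$ and $j=1-\Omega$; intermediate $j\in\mathrm{RHS}$ arise from the same extremal argument applied to a sub-product whose $\Xi$ or $\Omega$ equals $j+1$ or $1-j$. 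For the upper inclusion, I would argue locally: given $j\in\supp w$ with $c^w_{u,v}\ne 0$, pick the nearest $i\in \supp\{u,v\}$ (with $i<j$ and $i>j$ symmetric via \eqref{eq:code-dualcode-relation} and the back-stability argument in Theorem~\ref{thm:double-grothendieck-stability}(c)), and show that $\Lambda_{i,j}(u)+\Lambda_{i,j}(v)>j-i$ follows from $\FS(w)\le \Xi(u,v)$ applied to a parabolic truncation of $u,v$ at position $i$.

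The main obstacle is the last step of the Schubert upper inclusion: Theorem~\ref{thm:double-grothendieck-stability}(a) supplies only the global extremal equality $\FS(u,v)=\Xi(u,v)$, whereas the corollary asserts local control at each $j\in \supp w$ relative to every $i\in\supp\{u,v\}$. Bridging this gap will likely require either a genuine parabolic-restriction lemma for Schubert products---relating the support of $\Sch_u\Sch_v$ near $j$ to that of the product of the truncations of $u,v$ to the window $[i,j]$---or a direct combinatorial argument exploiting the reduced-word shuffles of $u$ and $v$ from the proof of Theorem~\ref{thm:double-grothendieck-stability}(a) in \cite{HardtWallach}.
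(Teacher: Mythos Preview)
You correctly identify the main obstacle, and your proposed ``parabolic truncation'' is close to the right idea, but you have not found the concrete mechanism that makes it work. The paper's key observation is this: if $u,v\in S_\Z$ are \emph{non-overlapping} (meaning $|i-j|\ge 2$ for every $i\in\supp u$ and $j\in\supp v$), then Definition~\ref{def:double-groth} factors as $\Groth_{uv}(\bs{x},\bs{y}) = \Groth_u(\bs{x},\bs{y})\Groth_v(\bs{x},\bs{y})$, and the same holds for the other three families. Now for any $j\notin\supp\{u,v\}$ with $\min\supp\{u,v\}<j<\max\supp\{u,v\}$, write $u=u^{(1)}u^{(2)}$ and $v=v^{(1)}v^{(2)}$ with supports in $(-\infty,j)$ and $(j,\infty)$. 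The product $\Groth_u\Groth_v$ then equals $(\Groth_{u^{(1)}}\Groth_{v^{(1)}})(\Groth_{u^{(2)}}\Groth_{v^{(2)}})$, and $j$ lies in $\supp P$ if and only if $j\le \Xi(u^{(1)},v^{(1)})-1$ or $j\ge 1-\Omega(u^{(2)},v^{(2)})$---precisely the extremal question that Theorem~\ref{thm:double-grothendieck-stability} already answers. This resolves both inclusions simultaneously for interior $j\notin\supp\{u,v\}$, and it is exactly the ``sub-product'' you gesture at in your lower-inclusion sketch as well; your argument for intermediate $j$ in that direction is equally incomplete without it.

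Two further points. First, for $j\in\supp\{u,v\}$ and for $j$ outside $[\min\supp\{u,v\},\max\supp\{u,v\}]$, the paper uses that every $w\in P$ satisfies $u\le w$ and $v\le w$ in Bruhat order (so $\supp\{u,v\}\subseteq\supp P$, via \cite{GrahamKumar}), together with the interval statement of \cite[Theorem~1.3]{HardtWallach}; your treatment of these ranges is not fully spelled out either. Second, your entire reduction from double Grothendieck to single Schubert (through \eqref{eq:C-uv-w-y-expansion}, Lemma~\ref{lem:grothendieck-expansion-dual-lehmer}, its $w_0$-conjugate dual, and the no-cancellation argument) is unnecessary here: the factorization above and Theorem~\ref{thm:double-grothendieck-stability} hold for all four polynomial families, so the paper runs the argument once (for back-stable Schubert, say) and declares the other cases similar. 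Your reduction is plausible but adds work; note also that the claim ``$x\le u$ in strong Bruhat'' is not what you actually obtain or need---what you get is $\Theta_k(x)\le\Theta_k(u)$ and $\theta_k(x)\le\theta_k(u)$ via Lemmas~\ref{lem:grothendieck-expansion-dual-lehmer} and~\ref{lem:weak-bruhat-Lambda}, which suffices.
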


Notice that unless $i=j$, at most one of $\lambda_{j,i}(u) + \lambda_{j,i}(v), \Lambda_{i,j}(u) + \Lambda_{i,j}(v)$ can be nonzero.

\begin{proof}
Let $D$ be the set given by the right side of \eqref{eq:bs-support} or \eqref{eq:non-bs-support}; we show that $D = \supp P$. By Theorem~\ref{thm:double-grothendieck-stability}, $D$ has the correct maximum and minimum elements, accounting for the fact that the largest entry in the integer support is one less than the largest number moved. In addition,
\begin{equation} \label{eq:supp-uv-P}
\supp\{u,v\}\subseteq \supp P
\end{equation}
since every $w\in P$ satisfies $u\le w$, $v\le w$ 
(this follows from Theorem 5.1 and Proposition 4.3 of~\cite{GrahamKumar}; the second half of \cite[Theorem~5.1]{GrahamKumar} ensures that unless $u\le w, v\le w$, all coefficients on the right side of~\cite[(17)]{GrahamKumar} will be zero).
By the first part of \cite[Theorem~1.3]{HardtWallach},
\[
(\max(\supp\{u,v\}),\max D], \subseteq \supp P, \qquad [\min D, \min(\supp\{u,v\}))\subseteq\supp P;
\]
since these values of $j$ can also be seen to be in $D$, the result holds for these $j$.

Thus, we can restrict ourselves to $j\notin\supp\{u,v\}$ such that $\min(\supp\{u,v\}) < j < \max(\supp\{u,v\})$. We will use the following fact. Say that $u,v\in S_\Z$ are \emph{non-overlapping} if $|i-j|\ge 2$ for any $i\in\supp u, j\in\supp v$. When $u$ and $v$ are non-overlapping, Definition~\ref{def:double-groth} can be seen to factor:
\begin{equation} \label{eq:noncontiguous-grothendieck-factorization}
\Groth_{uv}(\bs{x},\bs{y}) = \Groth_{u}(\bs{x},\bs{y})\Groth_v(\bs{x},\bs{y}),
\end{equation}
and in particular, $\supp P = \supp\{u,v\}$ in this case.

Back to general $u$ and $v$, since $j\notin\supp\{u,v\}$, $u$ and $v$ factor as $u=u^{(1)}u^{(2)}, v=v^{(1)}v^{(2)}$, where $u^{(1)},v^{(1)}$ have support contained in $(-\infty,j)$ and $u^{(2)},v^{(2)}$ have support contained in $(j,\infty)$. By \eqref{eq:noncontiguous-grothendieck-factorization},
\begin{equation}
\Groth_u(\bs{x},\bs{y}) = \Groth_{u^{(1)}}(\bs{x},\bs{y})\Groth_{u^{(2)}}(\bs{x},\bs{y}), \qquad\qquad \Groth_v(\bs{x},\bs{y}) = \Groth_{v^{(1)}}(\bs{x},\bs{y})\Groth_{v^{(2)}}(\bs{x},\bs{y}),
\end{equation}
For the sake of concreteness, we now proceed in the case of the back-stable Schubert product; the other cases are similar. If $j\in D$, then either there exists $i<j$ such that $\Lambda_{i,j}(u^{(1)}) + \Lambda_{i,j}(v^{(1)}) > |j-i|$ or there exists $i>j$ such that $\lambda_{j,i}(u^{(2)}) + \lambda_{j,i}(v^{(2)}) > |j-i|$. In the former case, $j\in\supp \{w^{(1)}\in S_\Z \mid \ol{c_{u^{(1)},v^{(1)}}^{w^{(1)}}}\ne 0\}$, and in the latter case, $j\in\supp \{w^{(2)}\in S_\Z \mid\ol{c_{u^{(2)},v^{(2)}}^{w^{(2)}}}\ne 0\}$, so in both cases, by \eqref{eq:supp-uv-P}, $j\in\supp P$.

Finally, if $j\notin D$, then by the reverse of the reasoning in the prior paragraph,
\[
j\notin\supp \{w^{(1)}\in S_\Z \mid\ol{c_{u^{(1)},v^{(1)}}^{w^{(1)}}}\ne 0\}\qquad \text{and}\qquad j\notin\supp \{w^{(2)}\in S_\Z \mid\ol{c_{u^{(2)},v^{(2)}}^{w^{(2)}}}\ne 0\}.
\] Thus, any $w^{(1)}$ and $w^{(2)}$ from the above sets are non-overlapping, and by \eqref{eq:noncontiguous-grothendieck-factorization}, $j\notin\supp P$.\qedhere
\end{proof}

\subsection{Examples}

Let $u,v$ be permutations, and let $P$ be one of the sets in \eqref{eq:bs-support}. We observe the following consequences of Corollary \ref{cor:integer-support}.

\subsubsection{Maximal and minimal support of $P$}

Suppose that $|\supp\{u,v\}|=k$. Then $k\le |\supp P| \le 3k$. The minimum size of $\supp P$ occurs if and only if $\theta_i(u) + \theta_i(v) \le 1$ and $\Theta_i(u) + \Theta_i(v) \le 1$ for all $i$. In particular, if $\supp u$ and $\supp v$ are disjoint, then $\supp P = \supp\{u,v\}$. The maximum size of $\supp P$ occurs for any permutations $u,v\in S_{k+1}$ such that $u(1)=v(1)=k+1$ and $u(k+1)=v(k+1)=1$. Up to shifts and factorization into non-overlapping permutations, all cases where $\supp P$ is maximal have this form.

\subsubsection{Dominant permutations}

A permutation $w\in S_n$ is \emph{dominant} if its Lehmer code is a partition $\lambda$. In this case, we have
\begin{equation} \label{eq:dominant-example}
\theta_i(w) = \begin{cases} 1, & \text{if } 1\le i \le \ell(\lambda), \\ 0, & \text{otherwise,}\end{cases} \qquad
\Theta_i(w) = \begin{cases} 
    1, & \text{if } \exists j<i \text{ s.t. } \lambda_j-\lambda_i \ge i-j,\\
    0, & \text{otherwise.}
\end{cases}
\end{equation}

Suppose $u,v\in S_n$ are dominant permutations, corresponding to partitions $\lambda$ and $\mu$. Then by Corollary~\ref{cor:integer-support}, $\supp P = \{1-\min(\ell(\lambda),\ell(\mu)),\ldots, m\}$ where $m$ is the integer defined by
\[
m = \max_{j\le \max(\ell(\lambda),\ell(\mu))}\left(\lambda_j+\mu_j+j-1\right).
\]
Single Schubert and Grothendieck polynomials for dominant permutations are simply monomials, and the maximum value of $\supp P$ encodes that multiplication. However, the situation is less obvious for the minimum of $\supp P$ and for the case of double Schubert and Grothendieck polynomials.

\begin{figure}[h]
\begin{center}
\scalebox{.6}{$
\begin{array}{c@{\hspace{80pt}}c}
\begin{tikzpicture}
  \path[fill=black] (4,-1) circle (.1);
  \path[fill=black] (5,-2) circle (.1);
  \path[fill=black] (6,-3) circle (.1);
  \path[fill=black] (3,-4) circle (.1);
  \path[fill=black] (7,-5) circle (.1);
  \path[fill=black] (1,-6) circle (.1);
  \path[fill=black] (2,-7) circle (.1);
  \draw (4,-8)--(4,-1)--(8,-1);
  \draw (5,-8)--(5,-2)--(8,-2);
  \draw (6,-8)--(6,-3)--(8,-3);
  \draw (3,-8)--(3,-4)--(8,-4);
  \draw (7,-8)--(7,-5)--(8,-5);
  \draw (1,-8)--(1,-6)--(8,-6);
  \draw (2,-8)--(2,-7)--(8,-7);
  \draw[draw=black,line width=0.5mm] (0.5,-0.5) rectangle ++(1,-1);
  \draw[draw=black,line width=0.5mm] (0.5,-1.5) rectangle ++(1,-1);
  \draw[draw=black,line width=0.5mm] (0.5,-2.5) rectangle ++(1,-1);
  \draw[draw=black,line width=0.5mm] (0.5,-3.5) rectangle ++(1,-1);
  \draw[draw=black,line width=0.5mm] (0.5,-4.5) rectangle ++(1,-1);
  \draw[draw=black,line width=0.5mm] (1.5,-0.5) rectangle ++(1,-1);
  \draw[draw=black,line width=0.5mm] (1.5,-1.5) rectangle ++(1,-1);
  \draw[draw=black,line width=0.5mm] (1.5,-2.5) rectangle ++(1,-1);
  \draw[draw=black,line width=0.5mm] (1.5,-3.5) rectangle ++(1,-1);
  \draw[draw=black,line width=0.5mm] (1.5,-4.5) rectangle ++(1,-1);
  \draw[draw=black,line width=0.5mm] (2.5,-0.5) rectangle ++(1,-1);
  \draw[draw=black,line width=0.5mm] (2.5,-1.5) rectangle ++(1,-1);
  \draw[draw=black,line width=0.5mm] (2.5,-2.5) rectangle ++(1,-1);
  \node at (-0.5,-1) {{\Large $1$}};
  \node at (-0.5,-2) {{\Large $2$}};
  \node at (-0.5,-3) {{\Large $3$}};
  \node at (-0.5,-4) {{\Large $4$}};
  \node at (-0.5,-5) {{\Large $5$}};
  \node at (-0.5,-6) {{\Large $6$}};
  \node at (-0.5,-7) {{\Large $7$}};
  \end{tikzpicture}
  &
  \begin{tikzpicture}
  \path[fill=black] (4,-1) circle (.1);
  \path[fill=black] (5,-2) circle (.1);
  \path[fill=black] (6,-3) circle (.1);
  \path[fill=black] (3,-4) circle (.1);
  \path[fill=black] (7,-5) circle (.1);
  \path[fill=black] (1,-6) circle (.1);
  \path[fill=black] (2,-7) circle (.1);
  \draw (4,0)--(4,-1)--(0,-1);
  \draw (5,0)--(5,-2)--(0,-2);
  \draw (6,0)--(6,-3)--(0,-3);
  \draw (3,0)--(3,-4)--(0,-4);
  \draw (7,0)--(7,-5)--(0,-5);
  \draw (1,0)--(1,-6)--(0,-6);
  \draw (2,0)--(2,-7)--(0,-7);
  \draw[draw=black,line width=0.5mm] (2.5,-5.5) rectangle ++(1,-1);
  \draw[draw=black,line width=0.5mm] (2.5,-6.5) rectangle ++(1,-1);
  \draw[draw=black,line width=0.5mm] (3.5,-3.5) rectangle ++(1,-1);
  \draw[draw=black,line width=0.5mm] (3.5,-5.5) rectangle ++(1,-1);
  \draw[draw=black,line width=0.5mm] (3.5,-6.5) rectangle ++(1,-1);
  \draw[draw=black,line width=0.5mm] (4.5,-3.5) rectangle ++(1,-1);
  \draw[draw=black,line width=0.5mm] (4.5,-5.5) rectangle ++(1,-1);
  \draw[draw=black,line width=0.5mm] (4.5,-6.5) rectangle ++(1,-1);
  \draw[draw=black,line width=0.5mm] (5.5,-3.5) rectangle ++(1,-1);
  \draw[draw=black,line width=0.5mm] (5.5,-5.5) rectangle ++(1,-1);
  \draw[draw=black,line width=0.5mm] (5.5,-6.5) rectangle ++(1,-1);
  \draw[draw=black,line width=0.5mm] (6.5,-5.5) rectangle ++(1,-1);
  \draw[draw=black,line width=0.5mm] (6.5,-6.5) rectangle ++(1,-1);
  \node at (8.5,-1) {{\Large $1$}};
  \node at (8.5,-2) {{\Large $2$}};
  \node at (8.5,-3) {{\Large $3$}};
  \node at (8.5,-4) {{\Large $4$}};
  \node at (8.5,-5) {{\Large $5$}};
  \node at (8.5,-6) {{\Large $6$}};
  \node at (8.5,-7) {{\Large $7$}};
  \end{tikzpicture}
  \end{array}$}
\end{center}
\caption{Rothe and dual Rothe diagrams for the dominant permutation $w = 4563712\in S_7$. The corresponding partition is $\lambda=(3,3,3,2,2)$, and the conjugate partition in $\lambda' = (5,5,3)$. The parts of $\lambda$ appear as the number of boxes in each row of the Rothe diagram, while the parts of $\lambda'$ appear in the rows of the dual Rothe diagram given by \eqref{eq:dominant-example}.}
\label{fig:dominant}
\end{figure}

\subsubsection{Grassmannian permutations}

A permutation $w$ is $k$-\emph{Grassmannian} if it has a single descent at $k$. In this case, the Lehmer code $\code(w) = (c_1(w),\ldots,c_n(w))$ satisfies $c_k(w)\ge c_{k-1}(w) \ge \cdots \ge c_1(w)$, and $c_i(w) = 0$ for $i>k$. Let $\lambda$ be the partition formed from the nonzero parts of $\code(w)$. Similarly, the dual Lehmer code $\dualcode(w) = (d_1(w),\ldots,d_n(w))$ satisfies $d_{k+1}(w)\ge d_{k+2}(w) \ge \cdots \ge d_n(w)$, and $d_i(w) = 0$ for $i\le k$. The nonzero parts of $\dualcode(w)$ form the conjugate partition $\lambda'$. Thus we have
\[
\theta_i(w) = \begin{cases} 1, & \text{if } k-\ell(\lambda) < i \le k, \\ 0, & \text{otherwise,}\end{cases} \qquad \Theta_i(w) = \begin{cases} 1, & \text{if } k < i \le k + \ell(\lambda'), \\ 0, & \text{otherwise,}\end{cases}
\]
as can be seen in Figure~\ref{fig:grassmannian}.

Suppose that $u,v\in S_n$ are $k$-Grassmannian, corresponding to partitions $\lambda$ and $\mu$. (Double) Schubert/Grothendieck products correspond to cup products of (equivariant) cohomology/K-theory classes in the Grassmannian $Gr(n,k)$, so all $w\in P$ are also $k$-Grassmannian. Then by Corollary~\ref{cor:integer-support}, \[
\supp P = \{k-\ell(\lambda)-\ell(\mu)+1,\ldots,k+\ell(\lambda')+\ell(\mu')-1.\}.
\]
Thus, if $w\in P$ corresponds to the partition $\nu$, then we obtain the fact that $\nu$ is contained in a $(\ell(\lambda)+\ell(\mu)) \times (\ell(\lambda')+\ell(\mu'))$ box, recovering a well-known consequence (in the Schubert case) of the Littlewood-Richardson rule.

\begin{figure}[h]
\begin{center}
\scalebox{.6}{$
\begin{array}{c@{\hspace{80pt}}c}
\begin{tikzpicture}
  \path[fill=black] (2,-1) circle (.1);
  \path[fill=black] (3,-2) circle (.1);
  \path[fill=black] (6,-3) circle (.1);
  \path[fill=black] (7,-4) circle (.1);
  \path[fill=black] (1,-5) circle (.1);
  \path[fill=black] (4,-6) circle (.1);
  \path[fill=black] (5,-7) circle (.1);
  \path[fill=black] (8,-8) circle (.1);
  \draw (2,-9)--(2,-1)--(9,-1);
  \draw (3,-9)--(3,-2)--(9,-2);
  \draw (6,-9)--(6,-3)--(9,-3);
  \draw (7,-9)--(7,-4)--(9,-4);
  \draw (1,-9)--(1,-5)--(9,-5);
  \draw (4,-9)--(4,-6)--(9,-6);
  \draw (5,-9)--(5,-7)--(9,-7);
  \draw (8,-9)--(8,-8)--(9,-8);
  \draw[draw=black,line width=0.5mm] (0.5,-0.5) rectangle ++(1,-1);
  \draw[draw=black,line width=0.5mm] (0.5,-1.5) rectangle ++(1,-1);
  \draw[draw=black,line width=0.5mm] (0.5,-2.5) rectangle ++(1,-1);
  \draw[draw=black,line width=0.5mm] (0.5,-3.5) rectangle ++(1,-1);
  \draw[draw=black,line width=0.5mm] (3.5,-2.5) rectangle ++(1,-1);
  \draw[draw=black,line width=0.5mm] (3.5,-3.5) rectangle ++(1,-1);
  \draw[draw=black,line width=0.5mm] (4.5,-2.5) rectangle ++(1,-1);
  \draw[draw=black,line width=0.5mm] (4.5,-3.5) rectangle ++(1,-1);
  \node at (-0.5,-1) {{\Large $1$}};
  \node at (-0.5,-2) {{\Large $2$}};
  \node at (-0.5,-3) {{\Large $3$}};
  \node at (-0.5,-4) {{\Large $4$}};
  \node at (-0.5,-5) {{\Large $5$}};
  \node at (-0.5,-6) {{\Large $6$}};
  \node at (-0.5,-7) {{\Large $7$}};
  \node at (-0.5,-8) {{\Large $8$}};
  \end{tikzpicture}
  &
  \begin{tikzpicture}
  \path[fill=black] (2,-1) circle (.1);
  \path[fill=black] (3,-2) circle (.1);
  \path[fill=black] (6,-3) circle (.1);
  \path[fill=black] (7,-4) circle (.1);
  \path[fill=black] (1,-5) circle (.1);
  \path[fill=black] (4,-6) circle (.1);
  \path[fill=black] (5,-7) circle (.1);
  \path[fill=black] (8,-8) circle (.1);
  \draw (2,0)--(2,-1)--(0,-1);
  \draw (3,0)--(3,-2)--(0,-2);
  \draw (6,0)--(6,-3)--(0,-3);
  \draw (7,0)--(7,-4)--(0,-4);
  \draw (1,0)--(1,-5)--(0,-5);
  \draw (4,0)--(4,-6)--(0,-6);
  \draw (5,0)--(5,-7)--(0,-7);
  \draw (8,0)--(8,-8)--(0,-8);
  \draw[draw=black,line width=0.5mm] (1.5,-4.5) rectangle ++(1,-1);
  \draw[draw=black,line width=0.5mm] (2.5,-4.5) rectangle ++(1,-1);
  \draw[draw=black,line width=0.5mm] (5.5,-4.5) rectangle ++(1,-1);
  \draw[draw=black,line width=0.5mm] (5.5,-5.5) rectangle ++(1,-1);
  \draw[draw=black,line width=0.5mm] (5.5,-6.5) rectangle ++(1,-1);
  \draw[draw=black,line width=0.5mm] (6.5,-4.5) rectangle ++(1,-1);
  \draw[draw=black,line width=0.5mm] (6.5,-5.5) rectangle ++(1,-1);
  \draw[draw=black,line width=0.5mm] (6.5,-6.5) rectangle ++(1,-1);
  \node at (9.5,-1) {{\Large $1$}};
  \node at (9.5,-2) {{\Large $2$}};
  \node at (9.5,-3) {{\Large $3$}};
  \node at (9.5,-4) {{\Large $4$}};
  \node at (9.5,-5) {{\Large $5$}};
  \node at (9.5,-6) {{\Large $6$}};
  \node at (9.5,-7) {{\Large $7$}};
  \node at (9.5,-8) {{\Large $8$}};
  \end{tikzpicture}
  \end{array}$}
\end{center}
\caption{Rothe and dual Rothe diagrams for the $4$-Grassmannian permutation $w = 23671458\in S_8$. The corresponding partition is $\lambda=(3,3,1,1)$, and the conjugate partition is $\lambda'=(4,2,2)$. The parts of $\lambda$ appear as the number of boxes in the first $k$ rows of the Rothe diagram, while the parts of $\lambda'$ appear as the number of boxes in the last $n-k$ rows of the dual Rothe diagram.}
\label{fig:grassmannian}
\end{figure}

\bibliographystyle{siam}
\bibliography{bibliography.bib}

\end{document}